\def\@settitle{%
  \vspace*{-25pt}
  \begin{flushleft}%
    \LARGE\bfseries
    \strut\@title\strut
  \end{flushleft}%
}
\def\@setauthors{%
  \begingroup
  \def\thanks{\protect\thanks@warning}%
  \trivlist
  \raggedright
  \large \@topsep27\p@\relax
  \advance\@topsep by -\baselineskip
  \item\relax
  \author@andify\authors
  \def\\{\protect\linebreak}%
  \authors
  \ifx\@empty\contribs
  \else
    ,\penalty-3 \space \@setcontribs
    \@closetoccontribs
  \fi
  \normalfont
  \endtrivlist
  \endgroup
}
\def\@setaddresses{\par
  \nobreak \begingroup
  \small\raggedright
  \def\author##1{\nobreak\addvspace\smallskipamount}%
  \def\\{\unskip, \ignorespaces}%
  \interlinepenalty\@M
  \def\address##1##2{\begingroup
    \par\addvspace\bigskipamount\noindent
    \@ifnotempty{##1}{(\ignorespaces##1\unskip) }%
    {\ignorespaces##2}\par\endgroup}%
  \def\curraddr##1##2{\begingroup
    \@ifnotempty{##2}{\nobreak\noindent\curraddrname
      \@ifnotempty{##1}{, \ignorespaces##1\unskip}\/:\space
      ##2\par}\endgroup}%
  \def\email##1##2{\begingroup
    \@ifnotempty{##2}{\nobreak\noindent E-mail address%
      \@ifnotempty{##1}{, \ignorespaces##1\unskip}\/:\space
      \ttfamily##2\par}\endgroup}%
  \def\urladdr##1##2{\begingroup
    \def~{\char`\~}%
    \@ifnotempty{##2}{\nobreak\noindent\urladdrname
      \@ifnotempty{##1}{, \ignorespaces##1\unskip}\/:\space
      \ttfamily##2\par}\endgroup}%
  \addresses
  \endgroup
  \global\let\addresses=\@empty
}
\def\@setabstracta{%
    \ifvoid\abstractbox
  \else
    \skip@17pt \advance\skip@-\lastskip
    \advance\skip@-\baselineskip \vskip\skip@
    \box\abstractbox
    \prevdepth\z@ 
    \vskip-28pt
  \fi
}
\renewenvironment{abstract}{%
  \ifx\maketitle\relax
    \ClassWarning{\@classname}{Abstract should precede
      \protect\maketitle\space in AMS document classes; reported}%
  \fi
  \global\setbox\abstractbox=\vtop \bgroup
    \normalfont\small
    \list{}{\labelwidth\z@
      \leftmargin0pc \rightmargin\leftmargin
      \listparindent\normalparindent \itemindent\z@
      \parsep\z@ \@plus\p@
      
    }%
    \item[\hskip\labelsep\bfseries\abstractname.]%
}{%
  \endlist\egroup
  \ifx\@setabstract\relax \@setabstracta \fi
}
\def\ps@headings{\ps@empty
  \def\@evenhead{%
    \setTrue{runhead}%
    \normalfont\scriptsize
    \rlap{\thepage}\hfill
    \def\thanks{\protect\thanks@warning}%
    \leftmark{}{}}%
  \def\@oddhead{%
    \setTrue{runhead}%
    \normalfont\scriptsize
    \def\thanks{\protect\thanks@warning}%
    \rightmark{}{}\hfill \llap{\thepage}}%
  \let\@mkboth\markboth
}\ps@headings
\def\section{\@startsection{section}{1}%
  \z@{-1.4\linespacing\@plus-.5\linespacing}{.8\linespacing}%
  {\normalfont\bfseries\Large}}
\def\subsection{\@startsection{subsection}{2}%
  \z@{-.8\linespacing\@plus-.3\linespacing}{.5\linespacing\@plus.2\linespacing}%
  {\normalfont\bfseries\large}}
\def\subsubsection{\@startsection{subsubsection}{3}%
  \z@{.7\linespacing\@plus.2\linespacing}{-1.5ex}%
  {\normalfont\bfseries}}
\def\@secnumfont{\bfseries}
\renewcommand\contentsnamefont{\bfseries}
\def\@starttoc#1#2{\begingroup
  \setTrue{#1}%
  \par\removelastskip\vskip\z@skip
  \@startsection{}\@M\z@{\linespacing\@plus\linespacing}%
    {.5\linespacing}{
      \contentsnamefont}{#2}%
  \ifx\contentsname#2%
  \else \addcontentsline{toc}{section}{#2}\fi
  \makeatletter
  \@input{\jobname.#1}%
  \if@filesw
    \@xp\newwrite\csname tf@#1\endcsname
    \immediate\@xp\openout\csname tf@#1\endcsname \jobname.#1\relax
  \fi
  \global\@nobreakfalse \endgroup
  \addvspace{32\p@\@plus14\p@}%
  \let\tableofcontents\relax
}
\def\contentsname{Contents}
\def\l@section{\@tocline{2}{.5ex}{0mm}{5pc}{}}
\def\l@subsection{\@tocline{2}{0pt}{2em}{5pc}{}}
\def\Nopagebreak{\@nobreaktrue\nopagebreak}
\renewcommand{\setminus}{{\smallsetminus}}
\newcommand{\bp}{\begin{pmatrix}}
\newcommand{\ep}{\end{pmatrix}}
\theoremstyle{plain}
\newtheorem{theorem}[equation]{Theorem}
\newtheorem{lemma}[equation]{Lemma}
\newtheorem{proposition}[equation]{Proposition}
\newtheorem*{proposition*}{Proposition}
\newtheorem{corollary}[equation]{Corollary}
\newtheorem{theoremalpha}{Theorem}
\newtheoremstyle{theorem-giventitle}
        {}{}              
        {\itshape}                      
        {}                              
        {\bfseries}                     
        {.}                             
        { }                             
        {\thmnote{\bfseries#3}}
\theoremstyle{theorem-giventitle}
\newtheorem{theorem-named}{}
\theoremstyle{definition}
\newtheorem{definition}[equation]{Definition}
\newtheorem{remark}[equation]{Remark}
\newtheorem{claim}{Claim}
\numberwithin{claim}{subsection}
\theoremstyle{remark}
\newtheoremstyle{definition-giventitle}
        {}{}              
        {}                      
        {}                              
        {\bfseries}                     
        {.}                             
        {.7em}                             
        {\thmnote{\bfseries#3}}
\theoremstyle{definition-giventitle}
\newtheorem{step-named}{}
\newtheorem{claim-named}{}
\numberwithin{equation}{section}
  \def\rddots{\mathinner{\mkern1mu\raise\p@
     \vbox{\kern7\p@\hbox{.}}\mkern2mu
     \raise4\p@\hbox{.}\mkern2mu\raise7\p@\hbox{.}\mkern1mu}}
\def\Z{\mathbb Z}
\def\Q{\mathbb Q}
\def\sm{\setminus}
\def\d{\partial}
\def\cupover#1{\mathbin{\mathop{\cup}_{#1}}}
\DeclareMathOperator\lk{lk}
\DeclareMathOperator\Wh{Wh}
\def\diag{\operatorname{diag}}
\def\sbmatrix#1{\big[\begin{smallmatrix}#1\end{smallmatrix}\big]}
\begin{document}

\title{A family of freely slice good boundary links}

\author{Jae Choon Cha}
\address{Department of Mathematics, POSTECH,
  Pohang Gyeongbuk 37673, Republic of Korea
  \linebreak
  School of Mathematics, Korea Institute for Advanced Study,
  Seoul 02455, Republic of Korea}
\email{jccha@postech.ac.kr}

\author{Min Hoon Kim}
\address{School of Mathematics, Korea Institute for Advanced Study,
  Seoul 02455, Republic of Korea}
\email{kminhoon@kias.re.kr}

\author{Mark Powell}
\address{Department of Mathematical Sciences, Durham University,
  United Kingdom
}
\email{mark.a.powell@durham.ac.uk}

\def\subjclassname{\textup{2010} Mathematics Subject Classification}
\expandafter\let\csname subjclassname@1991\endcsname=\subjclassname
\expandafter\let\csname subjclassname@2000\endcsname=\subjclassname
\subjclass{57M25, 57N13, 57N70.}
\keywords{Topological surgery, good boundary links, freely slice links,
  homotopically trivial, derivatives of links}

\begin{abstract}
  We show that every good boundary link with a pair of derivative links on a
  Seifert surface satisfying a homotopically trivial plus assumption is
  freely slice.  This subsumes all previously known methods for freely slicing
  good boundary links with two or more components, and provides new freely
  slice links.
\end{abstract}

\maketitle

\section{Introduction}

The still open topological surgery conjecture for $4$-manifolds is equivalent to
the statement that all good boundary links are freely
slice~\cite[p.~243]{Freedman-Quinn:1990-1}. A \emph{good boundary link} is an
$m$-component link $L$ that admits a homomorphism $\phi\colon \pi_1(S^3 \sm L)
\to F$ sending the meridians to the $m$ generators of the free group $F$ of
rank~$m$, such that the kernel of $\phi$ is perfect, or equivalently such that
$H_1(S^3 \sm L;\Z F) = 0$. A Whitehead double of a link with vanishing linking
numbers is a special case.  We say that a link $L$ is \emph{freely slice} if $L$
bounds slicing discs in $D^4$ whose complement has free fundamental group
generated by the meridians of~$L$.

The goal of this paper is to describe a family of freely slice good boundary
links.  Previous work, including \cite{Freedman:1982-2, Freedman:1985-1,
Freedman:1988-2, Freedman:1993-1, Freedman-Teichner:1995-2}, has often focussed
on Whitehead doubles of links.  A complete list with a detailed discussion can
be found in Section~\ref{section:previous-results-comparison}. We expand these
results to a much larger class of good boundary links. After stating our main
theorem, we will explain the terminology used therein.

\begin{theoremalpha}
  \label{theorem:main}
  Suppose that $L$ is a good boundary link that has a Seifert surface admitting
  a homotopically trivial\/$^+$ good basis. Then $L$ is freely slice.
\end{theoremalpha}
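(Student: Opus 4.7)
The strategy is to reduce free sliceness of $L$ to a topological disk embedding problem for the pair of derivative links in a specific region of $D^4$, and then apply Freedman's disk embedding theorem over the free group on the meridians of $L$.

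First, I would push the Seifert surface $F$ for $L$ slightly into $D^4$ and take a bicollar neighbourhood $N(F) \cong F \times [-1,1]$. The good basis on $F$ determines a pair of derivative links $D_\alpha$ and $D_\beta$, pushed to opposite sides of $F$. A standard cut-and-paste argument should show that to freely slice $L$ it suffices to find disjoint, locally flat embedded disks in $W := D^4 \sm N(F)$ bounded by $D_\alpha \sqcup D_\beta$, with framings determined by the good basis, and whose complement in $W$ has free $\pi_1$ generated by the meridians of $L$. By the good-boundary-link hypothesis, $\pi_1(W)$ itself admits a surjection onto this free group with perfect kernel, so the ambient setting is already in the free/perfect regime required by Freedman's machinery.

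Next, the homotopically trivial${}^+$ hypothesis should produce immersed disks bounded by $D_\alpha \sqcup D_\beta$ in $W$: the ``homotopically trivial'' part supplies nullhomotopies, hence immersed disks after general position, while the ``${}^+$'' part should supply geometrically transverse dual $2$-spheres along with the framing and algebraic intersection-number vanishing needed as input to the disk embedding theorem. One then invokes Freedman's disk embedding theorem in the topological category: since $\pi_1(W)$ surjects onto the free group on the meridians, which is a good group (Freedman--Teichner, Krushkal--Quinn), the immersed disks may be replaced by disjoint, embedded, locally flat disks with the same boundary, framing, and complement fundamental group. Gluing $N(F)$ back along the resulting compressing disks then yields slice disks for~$L$.

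The main obstacle, I expect, is twofold. First, one must show that the ``${}^+$'' really does produce the geometric input --- immersed disks equipped with transverse dual $2$-spheres in $W$ having the appropriate $\pi_1$ behaviour --- as opposed to merely nullhomotopies of the derivatives in $S^3 \sm L$; this is where the bulk of the technical work should lie, as one has to organize the homotopies so that they avoid each other, avoid $F$, and carry the right framing data. Second, one must track fundamental groups carefully through every step, so that the final slice disk complement has $\pi_1$ \emph{freely} generated by the meridians of $L$, rather than merely generated by them. This $\pi_1$ control is what distinguishes free sliceness from ordinary sliceness, and it is also what allows the perfect-kernel hypothesis of the good boundary link to be transported correctly through the construction.
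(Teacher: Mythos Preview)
Your proposal contains a fatal error at the key technical step. You plan to invoke Freedman's disc embedding theorem over the free group on the meridians of $L$, asserting that this free group is ``a good group (Freedman--Teichner, Krushkal--Quinn)''. But nonabelian free groups are \emph{not} known to be good: the Freedman--Teichner and Krushkal--Quinn results establish goodness only for groups of subexponential growth, whereas free groups of rank $\ge 2$ have exponential growth. Indeed, as the paper's introduction states, the assertion that all good boundary links are freely slice is \emph{equivalent} to the open topological surgery conjecture. If free groups were known to be good, that conjecture would be resolved and Theorem~\ref{theorem:main} would follow trivially for every good boundary link, with no homotopically trivial$^+$ hypothesis needed at all.

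The paper's proof avoids this obstacle entirely. Rather than attempting disc embedding over a free group, it constructs a $4$-manifold $W$ with $\partial W = M_L$, $\pi_1(W)$ free on the meridians, and $H_2(W;\Z)\cong\Z^{2g}$, and then represents a basis of $H_2$ by an immersed union of transverse pairs of $2$-spheres that is \emph{$\pi_1$-null}: every loop in the image of the spheres is nullhomotopic in~$W$. This is exactly the hypothesis of Freedman--Quinn's Theorem~6.1 (surgery up to $s$-cobordism), which applies with no restriction on~$\pi_1(W)$. After passing to an $s$-cobordant $W'$ in which the spheres are embedded, one surgers them out to obtain the required slice disc exterior. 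The homotopically trivial$^+$ hypothesis is used not to feed a disc embedding theorem, but to manufacture immersed discs in $D^4$ with just enough mutual disjointness that the spheres built from them can be shown to be $\pi_1$-null --- and that verification is itself delicate, involving a careful choice of which push-off of each $b_i$ to use.
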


We need to introduce the notion of a \emph{good basis} on a boundary link
Seifert surface, and what it means for a good basis to be \emph{homotopically
trivial\/$^+$}. Consider a collection $V$ of disjoint Seifert surfaces for a
boundary link~$L$. We call such a collection a \emph{boundary link Seifert
surface}.  The \emph{Seifert form} $\theta \colon H_1(V;\Z) \times H_1(V;\Z) \to
\Z$ is defined via linking numbers of curves on $V$ and push-offs of such curves
into $S^3 \sm V$, in direct analogy with the knot case.  Choosing a basis for
$H_1(V;\Z)$ gives rise to a \emph{Seifert matrix} representing $\theta$.  We
give a detailed description in
Section~\ref{subsection:seifert-matrix-s-equivalence}.

\begin{definition}
  \label{definition:good-basis}
  A \emph{good basis} is a collection $\{a_i,b_i\}$ of simple closed curves on
  $V$, representing a basis for $H_1(V;\Z)$, whose geometric intersections are
  $a_i \cdot a_j = 0$, $b_i \cdot b_j = 0$, $a_i\cdot b_j = \delta_{ij}$, and
  such that the Seifert matrix of $V$ with respect to the basis is reducible by
  a sequence of elementary $S$-reductions to the null matrix.  That is, after
  reordering if necessary, the Seifert matrix with respect to
  $\{a_1,b_1,a_2,b_2,\ldots\}$ is of the form
  \begin{equation}
    \left[\begin{array}{cc;{.5pt/1pt}cc;{.5pt/1pt}c;{.5pt/1pt}cc}
      0 & \varepsilon_1   &  0 & *  & \multirow{2}{*}{${}\cdots{}$} &  0 & * \\
      1-\varepsilon_1 & 0  &  0 & *  &  &  0 & * \\ \hdashline[.5pt/1pt]
      0 & 0  &  0 & \varepsilon_2   & \multirow{2}{*}{${}\cdots{}$} &  0 & * \\
      * & *  &  1-\varepsilon_2 & 0  &  &  0 & * \\ \hdashline[.5pt/1pt]
      \multicolumn{2}{c;{.5pt/1pt}}{\vdots} & \multicolumn{2}{c;{.5pt/1pt}}{\vdots}
        & \ddots & \multicolumn{2}{c}{\vdots} \\[.5ex] \hdashline[.5pt/1pt]
      0 & 0  &  0 & 0  & \multirow{2}{*}{${}\cdots{}$} &  0 & \varepsilon_r \\
      * & *  &  * & *  &  &  1-\varepsilon_r & 0 \\
    \end{array}\right]
    \label{equation:s-reducible-matrix}
  \end{equation}
  where $\varepsilon_i=0$ or $1$ for each $i$, and each $*$ designates an
  arbitrary integer entry.
\end{definition}

Note that (\ref{equation:s-reducible-matrix}) contains no information about the
connected component of $V$ on which the pair $(a_i,b_i)$ lies. We prove the
following fact in Section~\ref{section:link-module-seifert-matrix}.

\begin{theorem-named}
  [Corollary~\ref{corollary:good-bd-link-iff-good-basis-exists}]

  A boundary link is good if and only if it has a boundary link Seifert surface
  admitting a good basis.
\end{theorem-named}

Recall~\cite{Milnor:1954-1} that an $m$-component link is said to be
\emph{homotopically trivial} if it is homotopic to the trivial link through
immersions of $m$ copies of the circle into $S^3$ with disjoint images. The
strongest result in the literature on freely slicing good boundary links, due to
Freedman and Teichner~\cite{Freedman-Teichner:1995-2}, states that a Whitehead
double of a homotopically trivial$^+$ \emph{link} is freely slice. Here a link
$J$ is said to be \emph{homotopically trivial\/$^+$} if for each component $J_i$
of $J$, the link $J\cup J_i^+$ is homotopically trivial, where $J_i^+$ is a zero
linking parallel copy of~$J_i$.

Our notion of a \emph{homotopically trivial$^+$ good basis} extends the
Freedman-Teichner definition for links to a more general context.   Let
$\{a_i,b_i\}$ be a good basis on a boundary link Seifert surface~$V$. Inspecting
the diagonal blocks of~\eqref{equation:s-reducible-matrix}, it follows that
~$a_i$ has trivial linking number with exactly one of the two translates of
~$b_i$ in the positive and negative normal directions of~$V$. Call this
push-off~$b_i'$ and let $K := \bigsqcup b_i'$.

\begin{definition}
  \label{definition:good-basis-HT+}
  We call the good basis $\{a_i,b_i\}$ \emph{homotopically trivial\/$^+$} if
  both of the links $K\sqcup a_j$ and $K\sqcup b_j$ are homotopically trivial
  for every~$j$.
\end{definition}

Observe that if $\{a_i,b_i\}$ is a homotopically trivial$^+$ good basis then
each of the sets of curves $\{a_i\}$ and $\{b_i\}$ is a \emph{derivative} of
$L$, that is a collection of mutually disjoint curves on a Seifert surface for
$L$ whose linking and self-linking numbers all vanish.  In fact all the entries
of (\ref{equation:s-reducible-matrix}) indicated by a $*$ are $0$ for a
homotopically trivial$^+$ good basis.   Boundary links admitting this kind of
Seifert matrix were termed \emph{generalised doubles} by Freedman and Krushkal
in~\cite[Definition~5.9]{Freedman-Krushkal-2016-A}.

To understand the connection between the two notions of homotopically
trivial$^+$, consider $\Wh(J)$, an untwisted Whitehead double of the link $J$
with arbitrary clasp signs. Recall that $\Wh(J)$ bounds a standard boundary link
Seifert surface with each connected component genus one.
Figure~\ref{figure:good-basis-for-wh} illustrates the case of a positive clasp.
It is straightforward to verify the following.
\begin{enumerate}[label=(\roman*)]
\item If $J$ has vanishing pairwise linking numbers, so that $\Wh(J)$ is
a good boundary link, then the curves $a_i$ and $b_i$ shown in
Figure~\ref{figure:good-basis-for-wh} form a good basis.
\item The good basis $\{a_i,b_i\}$ is homotopically trivial$^+$ if and only if
the link $J$ is homotopically trivial$^+$ in the sense of Freedman-Teichner. We
provide the details in Lemma~\ref{lemma:wh-and-ht+}.
\end{enumerate}

\begin{figure}[htb!]
  \includegraphics{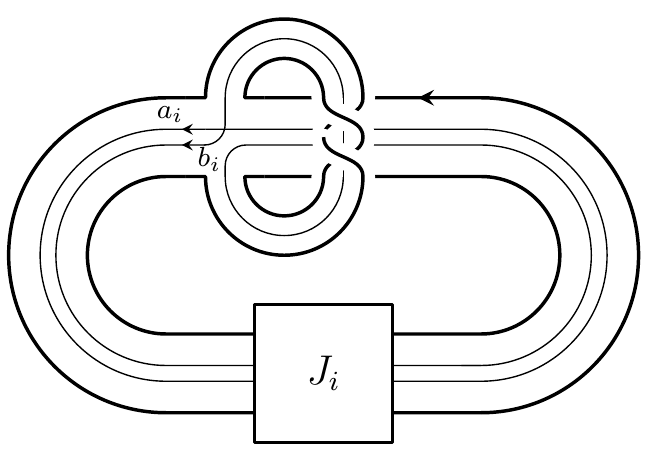}
  \caption{Standard good basis for a Whitehead double.}
  \label{figure:good-basis-for-wh}
\end{figure}

Thus the fact that Whitehead doubles of homotopically trivial$^+$ links are
slice is a special case of Theorem~\ref{theorem:main}. Moreover,
Theorem~\ref{theorem:main} provides new freely slice links, not previously known
to be slice.  In particular, in Section~\ref{subsection:examples}, we provide
examples not arising as a Whitehead double.

We close the introduction by noting that it is logically conceivable that
Theorem~\ref{theorem:main} applies to freely slice Whitehead doubles of
non-homotopically trivial$^+$ links. We discuss further related questions in
Section~\ref{section-questions}.

\subsection*{Acknowledgements}

JCC was partly supported by National Research Foundation of Korea (NRF) grant No.\ 2019R1A3B2067839.  Part of this work was done
when JCC was visiting the Max Planck Institute for Mathematics in Bonn.  MHK
thanks Durham University and Anthony Conway for their hospitality during visits
to Durham.  MHK was partly supported by the POSCO TJ Park Science Fellowship.

\section{Seifert matrices of boundary links}
\label{section:link-module-seifert-matrix}

In this section we present certain facts on Seifert matrices of boundary links
and $S$-equivalence, which may be known to the experts. We could not find these
facts in the literature, so we provide the proofs. The main goal of the section
is to prove the following proposition.

\begin{proposition}\label{proposition:s-reducible-seifert-matrix}
  A boundary link $L$ is good if and only if there is a boundary link Seifert
  surface for $L$ such that the associated Seifert matrix can be changed to a
  null matrix by $S$-reductions only.
\end{proposition}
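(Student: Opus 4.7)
The plan is to prove the biconditional by relating $S$-reducibility of the Seifert matrix to the vanishing of the link module $H_1(S^3\sm L;\Z F)$, which by definition characterizes good boundary links. The first step is to present the link module in terms of the Seifert data: for a boundary link Seifert surface $V$ with basis $\{e_i\}$ of $H_1(V;\Z)$, where each $e_i$ sits on a specific component $V_{k(i)}$ of $V$, a Mayer--Vietoris argument applied to the free cover of $E_L=S^3\sm \nu(L)$ cut along lifts of $V$ yields a presentation of $H_1(S^3\sm L;\Z F)$ by a matrix of the form $M=TA-A^T$ (with appropriate sign convention), where $A$ is the Seifert matrix and $T=\diag(t_{k(i)})$ is the diagonal matrix of free generators. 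This is the boundary-link analogue of the classical presentation of the Alexander module of a knot, and I would derive it in a preliminary subsection before the main proof.

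For ($\Leftarrow$): Suppose the Seifert matrix $A$ of some boundary link Seifert surface is $S$-reducible to the null matrix. Then in a suitable basis $A$ takes the block form \eqref{equation:s-reducible-matrix}. Substituting into $M$, the diagonal $2\times 2$ blocks become $\sbmatrix{0 & t_{k(i)}\varepsilon_i-(1-\varepsilon_i) \\ t_{k(i)}(1-\varepsilon_i)-\varepsilon_i & 0}$, each having determinant $\pm t_{k(i)}$, a unit in $\Z F$. The zero patterns in \eqref{equation:s-reducible-matrix} translate into a sparsity pattern on $M$ which makes the last two rows and columns cleanly eliminable: row $2r-1$ has a single nonzero entry (a unit) at column $2r$, column $2r-1$ has a single nonzero entry (a unit) at row $2r$, and all other entries of these rows and columns vanish. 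Eliminating the last $2\times 2$ block via these units and iterating reduces $M$ to the empty matrix, so $H_1(S^3\sm L;\Z F)=0$ and $L$ is good.

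For ($\Rightarrow$): Suppose $L$ is good. Fix any boundary link Seifert surface $V_0$ with Seifert matrix $A_0$; the corresponding $M_0$ presents the zero $\Z F$-module. By the elementary theory of module presentations, $M_0$ can be trivialized by a sequence of hyperbolic stabilizations and basis changes. I would realize each algebraic stabilization geometrically as an ambient tubing of $V_0$ along a suitable arc in $S^3\sm V_0$, which contributes exactly a $2\times 2$ hyperbolic block to the Seifert matrix. Performing all the required stabilizations first produces a new boundary link Seifert surface $V$ for $L$ whose Seifert matrix can then be $S$-reduced to null by reductions alone. The main obstacle is upgrading the algebraic reducibility of a trivial presentation to the specific block form \eqref{equation:s-reducible-matrix}: one must construct a basis of $H_1(V;\Z)$ producing not only hyperbolic diagonal blocks but also the prescribed zeros off the diagonal, and one must track component labels $k(i)$ so that the $T$-structure is compatible. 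I expect this to follow from an induction that peels off one $2\times 2$ block at a time (starting from the last), exploiting at each stage the triviality of the partially reduced link module together with the geometric freedom of ambient tubing to arrange for the required row/column sparsity at the bottom of the matrix.
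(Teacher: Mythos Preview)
Your ($\Leftarrow$) direction is correct and is a pleasant direct computation: the sparsity of the block form~\eqref{equation:s-reducible-matrix} does transfer to the presentation matrix $M=TA-A^T$ exactly as you describe, and the inductive elimination over $\Z F$ goes through. This is different from the paper, which instead cites the equivalence ``good $\Leftrightarrow$ $S$-equivalent to null'' (Theorem~\ref{theorem:link-module-S-equivalence}) for both directions.

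Your ($\Rightarrow$) direction, however, has a genuine gap at precisely the point you flag. Knowing that $M_0=TA_0-A_0^T$ presents the zero $\Z F$-module tells you $M_0$ is invertible over $\Z F$, but passing from this to the existence of a basis putting $A$ (after stabilization) into the specific block shape~\eqref{equation:s-reducible-matrix} is the whole difficulty. Your proposed ``peel off one $2\times 2$ block at a time'' induction would require, at each stage, producing an element of $H_1(V;\Z)$ with vanishing self-pairing and vanishing pairing against a full half-rank complement---essentially finding an isotropic vector for the Seifert form with prescribed orthogonality---and there is no elementary mechanism over $\Z F$ that hands you this. The ``geometric freedom of ambient tubing'' only realizes $S$-enlargements; it does not help you locate the right basis vectors.

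The paper sidesteps this entirely. It first invokes high-dimensional boundary link theory (Liang, Farber) to get ``good $\Leftrightarrow$ Seifert matrix $S$-equivalent to null'' as a black box. Then it proves purely combinatorial rearrangement lemmas on $S$-equivalence (Lemma~\ref{lemma:replace-min-by-max}, Corollary~\ref{cor:S-equivalence-with-one-maximum}, Lemma~\ref{lemma:s-reduction-congruence-switch}): any $S$-equivalence can be reordered so that all enlargements come first, then a single congruence, then only reductions. Since enlargements are realized geometrically by $1$-surgery on the Seifert surface, and congruence is a basis change, this immediately yields a Seifert surface whose matrix $S$-reduces to null. The trade-off: the paper's route is short and clean but relies on nontrivial citations; your route is self-contained for ($\Leftarrow$) but would need substantial new work---essentially reproving Theorem~\ref{theorem:link-module-S-equivalence}---to close ($\Rightarrow$).
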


We will give the definitions of the terms used in
Proposition~\ref{proposition:s-reducible-seifert-matrix} in
Section~\ref{subsection:seifert-matrix-s-equivalence} below.
The following appeared as a proposition in the introduction.

\begin{corollary}
  \label{corollary:good-bd-link-iff-good-basis-exists}
  A boundary link is good if and only if it has a boundary link Seifert surface
  admitting a good basis.
\end{corollary}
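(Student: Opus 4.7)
The $(\Leftarrow)$ direction is immediate from Proposition~\ref{proposition:s-reducible-seifert-matrix}: if $V$ admits a good basis $\{a_i, b_i\}$, then by Definition~\ref{definition:good-basis} the Seifert matrix with respect to this basis takes the form~(\ref{equation:s-reducible-matrix}), which is manifestly reducible to the null matrix by iteratively $S$-reducing the top-left $2 \times 2$ block. Proposition~\ref{proposition:s-reducible-seifert-matrix} then gives that $L$ is good.

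For the $(\Rightarrow)$ direction, Proposition~\ref{proposition:s-reducible-seifert-matrix} yields a boundary link Seifert surface $V$ whose Seifert matrix, in some basis of $H_1(V;\Z)$, takes the form~(\ref{equation:s-reducible-matrix}). The task is to upgrade this algebraic basis to one realized by simple closed curves on $V$ with the prescribed geometric intersection pattern. My plan is to exploit the classical geometric interpretation of elementary $S$-reductions: each such reduction corresponds to a destabilization of the Seifert surface along a pair of disjoint simple closed curves $(a, b)$ with $a \cdot b = 1$ whose Seifert block is $\sbmatrix{0 & \varepsilon \\ 1-\varepsilon & 0}$. Running the sequence of $S$-reductions provided by Proposition~\ref{proposition:s-reducible-seifert-matrix} backwards from the trivial Seifert surface (a disjoint union of disks bounding the trivial link), the surface $V$ is reconstructed by iterated stabilizations, each stabilization contributing a pair $(a_i, b_i)$ of simple closed curves. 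Assembling these pairs produces the desired good basis: the geometric conditions $a_i \cdot b_j = \delta_{ij}$ and $a_i \cdot a_j = b_i \cdot b_j = 0$ hold because the attaching discs of distinct handles can be taken disjoint, and the Seifert matrix takes the form~(\ref{equation:s-reducible-matrix}) by construction from the prescribed sequence of $S$-reductions.

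The main obstacle is making the correspondence between algebraic $S$-reductions and geometric destabilizations precise in the boundary link setting. While the correspondence is classical for knot Seifert surfaces (going back to Murasugi and Trotter), in the multi-component case one must ensure that each destabilization respects the disjoint union decomposition $V = V_1 \sqcup \cdots \sqcup V_m$, so that after the reverse-stabilization process the result is again a boundary link Seifert surface for $L$ rather than a connected surface. Depending on how constructive the proof of Proposition~\ref{proposition:s-reducible-seifert-matrix} turns out to be, this may follow immediately from its proof, or require an auxiliary lemma establishing that $S$-equivalence of boundary link Seifert matrices lifts to stable equivalence of boundary link Seifert surfaces respecting the component structure. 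Once this geometric realization of $S$-reductions is in place, the two implications of the corollary are simply the algebraic and geometric sides of the same correspondence.
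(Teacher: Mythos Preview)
Your $(\Leftarrow)$ direction is fine and matches the paper.

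For $(\Rightarrow)$, your plan has a genuine gap. The correspondence between algebraic $S$-reductions and geometric destabilizations does not do what you want here: a destabilization (compressing $V$ along an embedded disk in the complement) preserves the boundary $\partial V = L$, so a sequence of destabilizations of $V$ can only terminate at a minimal-genus Seifert surface for $L$, never at a disjoint union of disks unless $L$ itself is the unlink. The null matrix at the end of your $S$-reduction sequence is a Seifert matrix for the \emph{unlink}, not for $L$, so there is no way to trace this sequence geometrically on surfaces bounding~$L$. More concretely, the form~(\ref{equation:s-reducible-matrix}) only records that $a_1$ has certain vanishing linking numbers with push-offs of the other basis curves; it does not force $a_1$ to bound an embedded disk in $S^3 \setminus V$, which is what a geometric compression would require. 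The obstacle you flag in your last paragraph is therefore not a technical wrinkle about respecting components, but a genuine obstruction already present in the knot case.

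The paper's argument avoids destabilization entirely. Once the Seifert matrix is in the form~(\ref{equation:s-reducible-matrix}), the matrix $A - A^T$ representing the intersection form is a block sum of $\pm\sbmatrix{0&1\\-1&0}$, so the algebraic basis is symplectic. The key input is then the mapping class group fact (e.g.\ \cite[Theorem~6.4]{Farb-Margalit:2012-1}) that every symplectic automorphism of $H_1(V_k;\Z)$ is realised by a homeomorphism of the surface~$V_k$. Applying such a homeomorphism to any standard geometric symplectic basis on $V_k$ produces simple closed curves realising the given algebraic basis with the prescribed geometric intersections. The surface $V$ never changes; only the curves on it do. This is the missing idea in your proposal.
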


We prove the corollary assuming
Proposition~\ref{proposition:s-reducible-seifert-matrix} and
Theorem~\ref{theorem:link-module-S-equivalence} below, which states that a
boundary link $L$ is good if and only if some (and therefore every) Seifert
matrix for $L$ is $S$-equivalent to the null matrix.

\begin{proof}
  Let $L$ be a good boundary link and let $V=\bigsqcup V_i$ be a boundary link
  Seifert surface for $L$ as given by
  Proposition~\ref{proposition:s-reducible-seifert-matrix}, where $V_i$ is a
  Seifert surface for the $i$th component.   We may and shall assume that each
  $V_i$ is connected.  Fix a basis for $H_1(V_i;\Z)$, for each $i$, such that
  the corresponding Seifert matrix $A$ can be changed to a null matrix by
  $S$-reductions.  The basis is symplectic with respect to the intersection
  form, since $A-A^T$, which represents the intersection form by
  Definition~\ref{definition:seifert-matrix} and the subsequent paragraph, is a
  diagonal sum of matrices of the form $\pm\sbmatrix{0 & 1 \\ -1 & 0}$, by
  Definition~\ref{definition:S-equivalence}~(ii) and~(iii) below.  Since every
  symplectic automorphism of $H_1(V_i;\Z)$ is realised by a homeomorphism
  of~$V_i$ (e.g., see~\cite[Theorem~6.4]{Farb-Margalit:2012-1}), this basis can
  be represented by simple closed curves whose geometric intersections match
  their algebraic intersection numbers.  By definition, these curves form a good
  basis.

  For the converse, suppose that a boundary link $L$ has a Seifert surface
  admitting a good basis.  Then $L$ has a Seifert matrix which is $S$-equivalent
  to a null matrix, so $L$ is good by
  Theorem~\ref{theorem:link-module-S-equivalence}.
\end{proof}

\subsection{Seifert matrices and $S$-equivalence}
\label{subsection:seifert-matrix-s-equivalence}

We begin with the basic definitions.   We always assume that an $m$-component
boundary link $L=L_1\sqcup \cdots\sqcup L_m$ is endowed with a choice of
epimorphism $\phi \colon \pi_1(S^3\sm L)\to F$ onto a free group $F$ of
rank~$m$ that sends the $i$th meridian to the $i$th free generator.  This is
equivalent to assuming, via the Pontryagin-Thom construction, that~$L$ comes
with a choice of $m$ disjoint Seifert surfaces $V_i$  with $\partial V_i = L_i$.
A boundary link with such a choice is often called an \emph{$F$-link}.

The standard definition of a Seifert matrix of a knot generalises to
the case of boundary links, as follows.

\begin{definition}
  \label{definition:seifert-matrix}
  Define a bilinear pairing $H_1(V_i;\Z)\times H_1(V_j;\Z) \to \Z$ by $([x],[y])
  \mapsto \lk(x^+,y)$ for 1-cycles $x$ and $y$, where $x^+$ denotes a push-off
  of $x$ along the positive normal direction of~$V_i$.  Choosing a basis of
  $H_1(V_i;\Z)$ for each $i$ determines a matrix~$A_{ij}$. The block matrix
  $A=[A_{ij}]_{1\le i,j\le m}$ comprising $m^2$ submatrices $A_{ij}$ is
  called a \emph{Seifert matrix} for~$L$.
\end{definition}

We remark that $A_{ii}-A_{ii}^T$ represents the intersection form of $V_i$ and
consequently is unimodular by Poincar\'{e}-Lefschetz duality on $V_i$ and the
isomorphism $H_1(V_i;\Z) \cong H_1(V_i,\partial V_i;\Z)$.  Also,
$A_{ij}=A_{ji}^T$ for $i\ne j$ by symmetry of the linking number.

The notion of $S$-equivalence for Seifert matrices of knots generalises to the
case of boundary links~\cite{Liang:1977-1}.  The next two definitions are
algebraic.

\begin{definition}
  We say that a square integral matrix $A$ is an \emph{$m$-component boundary
  link Seifert matrix} if it has a decomposition into $m^2$ blocks $A_{ij}$
  ($i,j =1,\dots,m$) with $A_{ii}$ a square matrix for each $i$, such that
  $\det(A_{ii} - A_{ii}^T) = \pm 1$ and $A_{ij} = A_{ji}^T$ for $i \neq j$.
\end{definition}

\begin{definition}
  \label{definition:S-equivalence}
  Suppose $A=[A_{ij}]$ and $B=[B_{ij}]$ are $n$-component boundary link Seifert
  matrices.
  \begin{enumerate}[label=(\roman*)]
  \item We say that $A$ and $B$ are \emph{congruent} if there are square
    unimodular matrices $P_1,\ldots, P_m$ such that $P_i^T A_{ij} P_j = B_{ij}$
    for all $i, j$. That is, for
    \[
    P=\begin{bmatrix} P_1 \\ & \ddots \\ && P_m\end{bmatrix},
    \]
    the block matrix product $P^TAP$ equals~$B$.
  \item We say that a square integral matrix $B$ is an \emph{$S$-enlargement} of
    $A$ if $B$ is of the form
    \[
    \def\V{\vphantom{\Bigg|}} \def\v{\vphantom{\Big|}} \def\s{$\;\;$}
    B=\left[\begin{array}{c|c|c@{\s}c@{\s}c|c|c}
      \ddots & \vdots & \multicolumn{3}{c|}{\vdots} & \vdots & \rddots \\ \hline
      \cdots\V & A_{k-1,k-1} & 0 & x_{k-1}^T & A_{k-1,k} & A_{k-1,k+1} & \cdots \\ \hline
      \v & 0 & 0         & \varepsilon & 0     & 0          &  \\
      \cdots & x_{k-1} & \varepsilon' & 0        & x_k   & x_{k+1}     & \cdots \\
      \v & A_{k,k-1} & 0         & x_k^T    & A_{kk} & A_{k,k+1}   & \\ \hline
      \cdots\V & A_{k+1,k-1} & 0        & x_{k+1}^T & A_{k+1,k}  & A_{k+1,k+1} & \cdots \\ \hline
      \rddots & \vdots & \multicolumn{3}{c|}{\vdots} & \vdots & \ddots \\
    \end{array}\right]
    \]
    for some~$k$, where $(\varepsilon,\varepsilon')=(1,0)$ or $(0,1)$ and $x_i$
    designates an arbitrary row vector of the appropriate dimension for
    $i=1,\ldots,m$.  That is, $B_{ij}$ is equal to
    \[
    \begin{array}{cl@{\qquad}cl}
      \begin{bmatrix}
        0 & \varepsilon' & 0 \\
        \varepsilon & 0 & x_k \\
        0 & x_k^T & A_{kk}
      \end{bmatrix}
      & \text{for } i=k, \; j=k, &
      \begin{bmatrix}
        0 \\ x_j \\ A_{kj}
      \end{bmatrix}
      & \text{for } i=k,\; j\ne k, \\[2em]
      \begin{bmatrix}
        0 & x_i^T & A_{ik}
      \end{bmatrix}
      & \text{for } i\ne k,\; j=k, &
      A_{ij} & \text{for } i\ne k,\; j\ne k.
    \end{array}
    \]
  \item We say that $B$ is an \emph{$S$-reduction} of $A$ if $A$ is an
    $S$-enlargement of~$B$.
  \item Finally, we say that $A$ and $B$ are \emph{$S$-equivalent} if
    one can be obtained from another by a finite sequence of congruences,
    $S$-enlargements, and $S$-reductions.
  \end{enumerate}
\end{definition}

\begin{proposition}[{\cite[Theorem~1]{Liang:1977-1}}]
Any two boundary link Seifert matrices of the same boundary link are
$S$-equivalent.
\end{proposition}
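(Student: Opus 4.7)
The plan is to generalize Murasugi's classical argument for knots via handle cobordisms of Seifert surfaces. Starting with two boundary link Seifert surfaces $V = \bigsqcup V_i$ and $V' = \bigsqcup V_i'$ for the same $F$-link $L$, I would first show that these are related by a finite sequence of elementary geometric moves: (a) ambient isotopy rel $L$, (b) attachment of a 1-handle (tube) within a single component $V_i$, in a way that keeps it disjoint from the other $V_j$, and (c) the inverse of (b). Each such move produces another boundary link Seifert surface for the same $F$-link.

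To establish such a sequence I would place $V \times \{0\}$ and $V' \times \{1\}$ inside $S^3 \times [0,1]$, together with the collar $L \times [0,1]$. Because $V_i$ and $V_i'$ both represent the class dual to the $i$-th generator under the $F$-structure $\phi$, they are homologous in $S^3 \sm L$, and this can be upgraded to a compact orientable 3-manifold $W_i \subset S^3\times[0,1]$ with $\partial W_i = V_i \times\{0\} \cup V_i'\times\{1\} \cup (L_i\times[0,1])$. After a small perturbation of the $W_i$ into general position, they are pairwise disjoint (being 3-dimensional in a 4-manifold). Choose a Morse function on each $W_i$ such that the projection to $[0,1]$ restricts to it, and arrange all critical points to lie at distinct heights across all components. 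Slicing $S^3 \times [0,1]$ at a generic height produces an intermediate boundary link Seifert surface; passing a critical level changes this surface by one of the elementary moves (a)--(c), with index-1 and index-2 critical points of $W_i$ corresponding to tube attachment and removal respectively. The index-0 and index-3 critical points can be cancelled in the usual way, or simply absorbed by adding a spurious isotopy.

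The second half of the proof is to verify that each geometric move induces an S-equivalence of block Seifert matrices. An ambient isotopy, combined with a basis change on $H_1(V_i;\Z)$, yields a congruence in the sense of Definition~\ref{definition:S-equivalence}(i). A tube attached within $V_k$ introduces two new basis elements: the core circle $c$ (bounding a disc in $S^3 \sm V$) and the dual circle $d$ running once over the tube. Computing linking numbers of push-offs shows that $c$ has trivial self-linking and trivial linking with all of the old basis elements and with the cores of tubes attached to other components; meanwhile $\lk(c^+,d) = \varepsilon$ and $\lk(d^+,c)=\varepsilon'=1-\varepsilon$ depending on which side of $V_k$ the tube is attached. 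This yields precisely the $S$-enlargement form in Definition~\ref{definition:S-equivalence}(ii), and the inverse move yields an $S$-reduction.

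The main obstacle is the geometric step: producing the disjoint cobording 3-manifolds $W_i$ and decomposing them simultaneously by Morse theory while preserving the boundary link property of every level surface. One must take care that, when pushing away a transverse intersection of two candidate $W_i$'s, the resulting modification does not create a forbidden piece (for instance, a cross-component tube). Since we have four dimensions to work with and the $W_i$ are codimension one, generic intersections of two distinct $W_i$ are empty, so this is really a matter of carefully ordering the critical levels and verifying that level-wise disjointness persists across each critical level crossing. With those points handled, everything else is bookkeeping and a direct computation of linking numbers on the model tube, completing the identification of the geometric handle calculus with the algebraic notion of $S$-equivalence.
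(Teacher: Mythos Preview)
The paper does not prove this proposition; it simply quotes it from \cite[Theorem~1]{Liang:1977-1}.  Your overall strategy---build a $3$-dimensional cobordism $W_i$ between $V_i$ and $V_i'$ inside $S^3\times[0,1]$, analyse it by Morse theory, and check that index-$1$ and index-$2$ critical points induce $S$-enlargement and $S$-reduction---is exactly the classical Levine--Murasugi argument, and is presumably the route taken in~\cite{Liang:1977-1}.  The algebraic verification in your second paragraph is fine.

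There is, however, a genuine gap in the geometric step.  You write that ``after a small perturbation of the $W_i$ into general position, they are pairwise disjoint (being $3$-dimensional in a $4$-manifold)'' and later that ``generic intersections of two distinct $W_i$ are empty''.  This is false: two codimension-one submanifolds of a $4$-manifold meet generically in a surface, since $3+3-4=2$, not in the empty set.  So general position alone does not separate the $W_i$, and without disjointness the level sets $W_i\cap(S^3\times\{t\})$ need not form a boundary link Seifert surface.

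The fix is to use the $F$-structure rather than transversality.  Both collections $V=\bigsqcup V_i$ and $V'=\bigsqcup V_i'$ arise, via Pontryagin--Thom, from maps $S^3\sm\nu L\to\bigvee^m S^1$ realising the same homomorphism~$\phi$.  These maps are therefore homotopic rel boundary, and a choice of homotopy gives a map $(S^3\sm\nu L)\times[0,1]\to\bigvee^m S^1$ whose regular preimages are \emph{automatically disjoint} $3$-manifolds $W_1,\ldots,W_m$ with the required boundaries.  With the $W_i$ obtained this way, the rest of your argument (ordering critical values, reading off the block $S$-enlargement from the linking numbers of the new core and cocore) goes through.
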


When $B$ is an $S$-enlargement of $A$, we write $A
\nearrow B$, or $B \searrow A$.  When $A$ and $B$ are congruent, we
write $A\sim B$.  The following lemma tells us that for any sequence
of the moves described in the definition of $S$-equivalence, a
local minimum of the size of a Seifert matrix can be replaced by a
local maximum.

\begin{lemma}
  \label{lemma:replace-min-by-max}
  If $A$ and $B$ are Seifert matrices such that $A\searrow C \sim C' \nearrow B$
  for some $C$, $C'$, then there are Seifert matrices $D$, $D'$ such that
  $A\nearrow D \sim D' \searrow B$.
\end{lemma}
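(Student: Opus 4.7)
The plan is to construct $D$ and $D'$ as further $S$-enlargements of $A$ and $B$ respectively, choosing the enlargement data so that the block-diagonal congruence $P$ realising $C \sim C'$ extends naturally to one realising $D \sim D'$.  Write $A$ as the $S$-enlargement of $C$ at some block $k_1$ with row-vector data $(x_j)_{j=1}^m$ and signs $(\varepsilon_1, \varepsilon_1')$, and $B$ as the $S$-enlargement of $C'$ at some block $k_2$ with data $(y_j)$ and signs $(\varepsilon_2, \varepsilon_2')$.  Let $P = \diag(P_1, \dots, P_m)$ be block-diagonal unimodular with $P^T C P = C'$.

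Suppose first that $k_1 \ne k_2$.  Define $D$ to be the $S$-enlargement of $A$ at block $k_2$ with signs $(\varepsilon_2, \varepsilon_2')$ and data $(w_j)$, where $w_j = y_j P_j^{-1}$ for $j \ne k_1$, while $w_{k_1}$ is the row vector $y_{k_1} P_{k_1}^{-1}$ with two zero entries prepended to account for the two extra columns of block $k_1$ in $A$.  Symmetrically, define $D'$ to be the $S$-enlargement of $B$ at block $k_1$ with signs $(\varepsilon_1, \varepsilon_1')$ and data $(u_j)$, where $u_j = x_j P_j$ for $j \ne k_2$, and $u_{k_2}$ is $x_{k_2} P_{k_2}$ with two prepended zeros.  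The congruence $D \sim D'$ is realised by $\tilde P = \diag(\tilde P_1, \dots, \tilde P_m)$ with $\tilde P_{k_i} = I_2 \oplus P_{k_i}$ for $i = 1, 2$ and $\tilde P_j = P_j$ otherwise.  Block by block, $\tilde P^T D \tilde P = D'$ reduces to $P^T C P = C'$ on the original sub-blocks, while the zero prependings precisely ensure that the two sets of new rows and columns, in blocks $k_1$ and $k_2$ respectively, do not interact under $\tilde P$.

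In the case $k_1 = k_2 = k$, the same construction works, except that block $k$ of $D$ and of $D'$ gains four new rows and columns, and $\tilde P_k$ must be modified to $\sigma \cdot (I_4 \oplus P_k)$, where $\sigma$ is the permutation matrix that swaps the first pair of new rows/columns with the second pair.  This permutation reflects that $D$ is obtained by performing the $k_1$-enlargement first and the $k_2$-enlargement second, while $D'$ is obtained in the opposite order, so that the ``inner'' and ``outer'' enlargements switch roles.  With this choice, the signs and data of the second enlargement end up in the outer positions of $D'$ as required.

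The main technical difficulty lies in the bookkeeping for the $k_1 = k_2$ case, where the $(k,k)$ block of the doubly enlarged matrix has a busy structure of size $\dim(C_{kk}) + 4$.  Verifying $\tilde P^T D \tilde P = D'$ there amounts to checking that $\sigma$ interchanges the two $\varepsilon$-patterns correctly and that the inner $P_k$-conjugation sends the transformed data $x_k P_k$ and $y_k P_k^{-1}$ to the positions occupied by $u_k$ and $w_k$ in $D'$ and $D$; the off-diagonal blocks $(k,j)$ and $(j,k)$ with $j \ne k$ follow by the same computation as in the $k_1 \ne k_2$ case.  The verification is routine but notationally involved.
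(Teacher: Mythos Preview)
Your proof is correct and follows essentially the same approach as the paper: both construct $D$ by stacking the $B$-enlargement data (transported through $P^{-1}$) on top of $A$, and then conjugate by the block-diagonal extension of $P$ to obtain $D'$. Your treatment is in fact slightly more careful than the paper's, since you separate the cases $k_1\ne k_2$ and $k_1=k_2$ and make the permutation $\sigma$ explicit in the latter, whereas the paper writes $Q^T D Q \searrow B$ directly, tacitly absorbing that permutation into the reduction.
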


\begin{proof}
  We start with a proof in the two component case, that is $m=2$.  We
  consider the case that the reduction and enlargement happen to the same block,
  since the other cases are straightforward.  Let $C=[C_{ij}]$ and $C'=P^T C P$
  for $P=\diag(P_1, P_2)$.  Then
  \[{
  \newbox\temp \setbox\temp=\hbox{$\Big|$}
  \def\vd{\vrule depth\dp\temp width 0ex}
  \def\vh{\vrule height\ht\temp width 0ex}
  A = \left[\begin{array}{ccc|c}
      0 & \varepsilon & 0 & 0 \\
      \varepsilon' & 0 & x_1 & x_2 \\
      0 & x_1^T & C_{11} & C_{12} \vd \\ \hline
      0 & x_2^2 & C_{21} & C_{22} \vh
    \end{array}\right],\;\;
  B = \left[\begin{array}{ccc|c}
      0 & \delta & 0 & 0 \\
      \delta' & 0 & y_1 & y_2 \\
      0 & y_1^T & P_1^T C_{11} P_1 & P_1^T C_{12} P_2 \vd \\
      \hline
      0 & y_2^2 & P_2^T C_{21} P_1 & P_2^T C_{22} P_2 \vh
    \end{array}\right]
  }\]
  for some $x_i$, $y_i$, $\varepsilon$, $\varepsilon'$, $\delta$, and
  $\delta'$.  Now we have $A\nearrow D \sim Q^T D Q \searrow B$ for
  \[{
  \newbox\temp \setbox\temp=\hbox{$\Big|$}
  \def\vd{\vrule depth\dp\temp width 0ex}
  \def\vh{\vrule height\ht\temp width 0ex}
  D = \left[\begin{array}{ccccc|c}
      0 & \delta & 0 & 0 & 0 & 0 \\
      \delta' & 0 & 0 & 0 & y_1P_1^{-1} & y_2P_2^{-1} \\
      0 & 0 & 0 & \varepsilon & 0 & 0 \\
      0 & 0 & \varepsilon' & 0 & x_1 & x_2 \\
      0 & (P_1^{-1})^T y_1^T & 0 & x_1^T & C_{11} & C_{12} \vd \\ \hline
      0 & (P_2^{-1})^T y_2^T & 0 & x_2^T & C_{21} & C_{22} \vh
    \end{array}\right]} \text{ and}\]
\[{\newbox\temp \setbox\temp=\hbox{$\Big|$}
  \def\vd{\vrule depth\dp\temp width 0ex}
  \def\vh{\vrule height\ht\temp width 0ex}
    Q = \left[\begin{array}{ccccc|c}
      1 & & & & & \\
      & 1 & & & & \\
      & & 1 & & & \\
      & & & 1 & & \\
      & & & & P_1 & \vd \\ \hline
      & & & & & P_2 \vh \\
    \end{array}\right]
  }
  \]
  The general case is verified analogously, since as mentioned above the
  only interesting case is that the reduction and the enlargement happen to one
  block.  Without loss of generality we may assume it is the top left block.
  The matrices $D$ and $Q$ above extend in a natural way.
\end{proof}

Repeatedly apply Lemma~\ref{lemma:replace-min-by-max} to remove all local minima
of the size of $A$, so that there is at most one maximum.  We record the outcome
of this procedure.

\begin{corollary}
  \label{cor:S-equivalence-with-one-maximum}
  Suppose $A$ and $B$ are $S$-equivalent.  Then $A$ can be changed to $B$
  by a sequence of congruences, $S$-enlargements, and $S$-reductions
  in which no $S$-enlargement appears after any $S$-reduction.
\end{corollary}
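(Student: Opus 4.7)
The plan is to view the problem as sorting the moves in the given $S$-equivalence sequence so that all enlargements precede all reductions. Start with any sequence $A = A_0 \to A_1 \to \cdots \to A_n = B$ realizing the $S$-equivalence, and define an \emph{inversion} to be an ordered pair $(i,j)$ of steps with $i < j$ such that step $i$ is an $S$-reduction and step $j$ is an $S$-enlargement. The conclusion is equivalent to the inversion count being zero, so I will show that as long as inversions exist, one can apply Lemma~\ref{lemma:replace-min-by-max} in a way that strictly decreases this count.

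Suppose the inversion count is positive, and select an inversion $(i,j)$ with $j-i$ minimal. Minimality forces every intermediate step $i+1, \ldots, j-1$ to be a congruence: any intermediate reduction $i'$ would give a smaller inversion $(i', j)$, and any intermediate enlargement $j'$ would give a smaller inversion $(i, j')$. Composing the intervening congruences into a single congruence (congruences are closed under composition), the block $A_{i-1} \searrow A_i \sim A_{j-1} \nearrow A_j$ of the sequence matches exactly the hypothesis of Lemma~\ref{lemma:replace-min-by-max}. Applying the lemma replaces this block with $A_{i-1} \nearrow D \sim D' \searrow A_j$, producing a new sequence from $A$ to $B$ in which the reduction at position $i$ and the enlargement at position $j$ have been swapped.

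The main verification, and the only real obstacle, is to check that this swap strictly decreases the inversion count. The swap changes the move types only within the modified region: where previously there was a reduction followed later by an enlargement (contributing one inversion), now there is an enlargement followed later by a reduction (contributing none). Outside the modified region, the sequence of move types is unchanged. For cross pairs with one member inside and one outside the modified region: the enlargement (respectively reduction) in the region occupies a position relative to each outside step that plays the same role as the original enlargement (respectively reduction) did, so the cross-pair inversion counts agree before and after. Hence the net change in the inversion count is exactly $-1$.

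Since the inversion count is a non-negative integer, iterating this procedure terminates. The resulting sequence still realizes $A \simeq B$ and has zero inversions, so no $S$-enlargement appears after any $S$-reduction, as required.
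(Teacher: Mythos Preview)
Your proof is correct and follows essentially the same approach as the paper: locate a ``local minimum'' of the size profile (equivalently, your closest inversion), apply Lemma~\ref{lemma:replace-min-by-max} to flip it into a local maximum, and iterate. The paper's proof is a one-sentence sketch (``repeatedly apply Lemma~\ref{lemma:replace-min-by-max} to remove all local minima of the size''), whereas you have supplied an explicit terminating measure via the inversion count; this is a welcome addition of rigor but not a genuinely different argument.
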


Here is another elementary observation that enables us to rearrange the order of
operations applied to a Seifert matrix.

\begin{lemma}
  \label{lemma:s-reduction-congruence-switch}
  If $B$ results from $A$ after $S$-reductions and congruences, applied
  in any order, then $B$ can be obtained from $A$ by applying a single
  congruence first and then applying $S$-reductions only.
\end{lemma}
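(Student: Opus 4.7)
The plan is to show that an $S$-reduction followed by a congruence can always be replaced by a congruence followed by an $S$-reduction, and then iterate this swap to push every congruence to the left end of the sequence, finally collapsing the accumulated congruences into a single one.

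The key swap is the following claim: if $A \searrow A' \sim B$, then there exists $A''$ with $A \sim A'' \searrow B$. I would prove this by an explicit matrix construction. Let $P = \diag(P_1,\ldots,P_m)$ realize the congruence from $A'$ to $B$, so $B_{ij}=P_i^T A'_{ij}P_j$, and say the $S$-enlargement from $A'$ to $A$ occurs inside block~$k$, adding two rows and two columns at positions that I denote $i_0,i_0+1$ within that block. Define $\tilde P_k$ to be the unimodular matrix that is the identity on the two enlarged coordinates and equals $P_k$ on the remaining coordinates of block~$k$ (a permutation conjugate of $\diag(I_2,P_k)$), and let $\tilde P$ be the block-diagonal matrix with blocks $P_1,\ldots,\tilde P_k,\ldots,P_m$. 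A direct block computation, entirely analogous to the one written out in the proof of Lemma~\ref{lemma:replace-min-by-max}, shows that $\tilde P^T A \tilde P$ retains the zero pattern of an $S$-enlargement in block~$k$, both in the diagonal block and in the off-diagonal blocks $A_{kj}$ and $A_{ik}$ (whose zero row and column at the positions $i_0,i_0+1$ are untouched by the identity part of $\tilde P_k$), and that the underlying smaller matrix obtained after removing those two rows and columns is exactly~$B$.

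Given this swap, I would induct on the number of inversions in the given sequence of operations, where an inversion is an $S$-reduction that is followed later in the sequence by a congruence. Each application of the swap to an adjacent reduction--congruence pair strictly decreases the inversion count, so after finitely many swaps every congruence precedes every $S$-reduction. I would then use $(PQ)^T A (PQ) = Q^T(P^TAP)Q$ together with the fact that block-diagonal unimodular matrices are closed under multiplication to combine the initial run of congruences into a single congruence, producing a sequence of the required form.

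The main obstacle is purely the bookkeeping for the key swap: specifying $\tilde P_k$ carefully with respect to the positions of the two enlarged rows and columns within block~$k$, and checking that conjugation by $\tilde P$ preserves the zero pattern in every off-diagonal block involving index~$k$, not just in block $A_{kk}$. Once that block computation is done in the style of the matrices $D$ and $Q$ appearing in the proof of Lemma~\ref{lemma:replace-min-by-max}, the induction and the final composition step are immediate.
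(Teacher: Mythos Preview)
Your proposal is correct and follows essentially the same approach as the paper. The paper reduces to the single swap $A \searrow P^TBP \sim B$ and exhibits the congruence matrix $Q=\diag(1,1,P_1^{-1},P_2^{-1})$ (which is exactly your $\tilde P$ after accounting for the opposite convention on which way $P$ goes), then leaves the iteration and composition of congruences implicit; you spell out the inversion-count induction and the final composition explicitly, but the content is the same.
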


\begin{proof}
  It suffices to consider the case of $A \searrow P^TBP \sim B$.  Once again,
  for brevity, we will consider the two component case only, and in addition
  just like in proof of the preceding lemma we assume without loss of generality
  that the $S$-reduction acts on the first block of the Seifert matrix. The same
  argument applies to the general case of more than two components, with the
  natural extension of the matrices. Write $B=[B_{ij}]$ and $P=\diag(P_1, P_2)$.
  We have
  \[{
  \newbox\temp \setbox\temp=\hbox{$\Big|$}
  \def\vd{\vrule depth\dp\temp width 0ex}
  \def\vh{\vrule height\ht\temp width 0ex}
  A = \left[\begin{array}{ccc|c}
      0 & \varepsilon & 0 & 0 \\
      \varepsilon' & 0 & x_1 & x_2 \\
      0 & x_1^T & P_1^T B_{11} P_1 & P_1^T B_{12} P_2 \vd \\ \hline
      0 & x_2^2 & P_2^T B_{21} P_1 & P_2^T B_{22} P_2 \vh
    \end{array}\right]
  \searrow
  \left[\begin{array}{c|c}
      P_1^T B_{11} P_1 & P_1^T B_{12} P_2 \vd \\ \hline
      P_2^T B_{21} P_1 & P_2^T B_{22} P_2 \vh
    \end{array}\right]}
  \]
 and this equals $P^T B P \sim B$.
  Let $Q=\diag(1,1,P_1^{-1},P_2^{-1})$.  Then
  \[{
  \newbox\temp \setbox\temp=\hbox{$\Big|$}
  \def\vd{\vrule depth\dp\temp width 0ex}
  \def\vh{\vrule height\ht\temp width 0ex}
  A \sim Q^T AQ =  \left[\begin{array}{ccc|c}
      0 & \varepsilon & 0 & 0 \\
      \varepsilon' & 0 & x_1 P_1^{-1} & x_2 P_2^{-1} \\
      0 & (P_1^{-1})^T x_1^T & B_{11} &  B_{12} \vd \\ \hline
      0 & (P_2^{-1})^T x_2^2 & B_{21} &  B_{22} \vh
    \end{array}\right]
  \searrow B
  }\]
  gives a congruence followed by an $S$-reduction instead.
\end{proof}

The above observations enable us to relate two Seifert matrices by a sequence of
operations that are monotone with respect to the matrix size.

\begin{lemma}
  \label{lemma:equivalence-by-s-reduction-only}
  Suppose $L$ and $L'$ are boundary links in $S^3$ with $S$-equivalent
  Seifert matrices.  Then for any given Seifert matrix $A'$ of $L'$,
  the other link $L$ has a Seifert matrix that can be changed to $A'$ by
  a sequence of $S$-reductions.
\end{lemma}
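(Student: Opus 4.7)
The plan is to start from any Seifert matrix $A$ of $L$, then massage the hypothetical $S$-equivalence $A \sim A'$ into the desired monotone form $A' \preceq \tilde A$ with $\tilde A$ realised by $L$.

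First, I would pick an arbitrary Seifert matrix $A$ of $L$ coming from some boundary link Seifert surface, so that $A$ is $S$-equivalent to $A'$. Apply Corollary~\ref{cor:S-equivalence-with-one-maximum} to this $S$-equivalence to obtain a factorisation
\[
A \;\text{(enlargements and congruences)}\; M \;\text{(reductions and congruences)}\; A',
\]
in which no $S$-reduction is performed before any $S$-enlargement. The intermediate matrix $M$ is obtained from $A$ using only $S$-enlargements and congruences.

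The key geometric input, to be invoked now, is that an $S$-enlargement of a Seifert matrix of $L$ is again a Seifert matrix of $L$: it corresponds geometrically to stabilising the underlying boundary link Seifert surface by attaching a trivial $1$-handle together with a compressing disc, in complete analogy with the knot case. Congruences of course correspond to a change of symplectic basis of $H_1$ of one of the components. So $M$ is itself a Seifert matrix of $L$, realised by some stabilisation of the original surface.

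Finally, the passage $M \to A'$ uses only $S$-reductions and congruences. By Lemma~\ref{lemma:s-reduction-congruence-switch} I can rewrite this passage as a single congruence $M \sim Q^T M Q$ followed by a sequence of $S$-reductions ending at $A'$. The matrix $Q^T M Q$ is congruent to the Seifert matrix $M$ of $L$ and so is itself a Seifert matrix of $L$, obtained from the same stabilised surface simply by changing the basis of $H_1$. Taking $\tilde A := Q^T M Q$ furnishes the Seifert matrix of $L$ required by the lemma.

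The only non-formal step is the geometric realisation claim that $S$-enlargement of a Seifert matrix of $L$ comes from stabilising a boundary link Seifert surface for $L$; this is the point I would write out with a little care, but it is essentially the same construction used for knots and for $S$-equivalence of boundary links in \cite{Liang:1977-1}. Everything else is a bookkeeping consequence of the two preceding lemmas.
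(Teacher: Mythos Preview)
Your proposal is correct and follows essentially the same argument as the paper: apply Corollary~\ref{cor:S-equivalence-with-one-maximum} to split the $S$-equivalence into an ``up'' phase and a ``down'' phase, realise the ``up'' phase geometrically by stabilising the Seifert surface and changing basis, then use Lemma~\ref{lemma:s-reduction-congruence-switch} to push the remaining congruence ahead of the $S$-reductions. The paper phrases the geometric realisation of an $S$-enlargement as ``$1$-surgery on any given Seifert surface''; your description as attaching a $1$-handle is the same operation, though the phrase ``together with a compressing disc'' is slightly misleading and should be dropped.
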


\begin{proof}
  Let $A$ be a Seifert matrix of~$L$.  By hypothesis, $A$ is $S$-equivalent to
  $A'$.  By Corollary~\ref{cor:S-equivalence-with-one-maximum}, $A$ can be
  changed, by congruences and $S$-enlargements, to some $B$ that can then itself
  be changed to $A'$ by a sequence of congruences and $S$-reductions.  Since an
  $S$-enlargement of a Seifert matrix is realised by $1$-surgery on any given
  Seifert surface with respect to which the Seifert matrix is defined, it
  follows that there is a Seifert surface $V$ for $L$ that has $B$ as its
  Seifert matrix.  By Lemma~\ref{lemma:s-reduction-congruence-switch}, a basis
  change of the first homology transforms $B$ to a Seifert matrix for $L$
  defined on the same $V$ that can be changed to $A'$ by $S$-reductions only.
\end{proof}

\subsection{$S$-equivalence and good boundary links}
\label{subsection:seifert-matrix-good-boundary-link}

Recall that a boundary link $L$ is said to be a \emph{good boundary link} if the
given epimorphism $\phi\colon \pi_1(S^3\sm L) \to F$ has a perfect kernel, or
equivalently if $H_1(S^3\sm L;\Z F)=0$. Since the Blanchfield form on
$H_1(S^3\sm L;\Z F)$ is nondegenerate
\cite[Section~2.7]{Hillman:2012-1-second-ed}, we see that $H_1(S^3\sm L;\Z F)$
has no $\Z$-torsion.  It follows that $L$ is good if and only if $H_1(S^3\sm
L;\Q F)=0$.


\begin{theorem}[\cite{Liang:1977-1}, \cite{Farber:1991-1}]
  \label{theorem:link-module-S-equivalence}
  A boundary link $L$ in $S^3$ is good if and only if some \textup{(}and thence
  every\textup{)} Seifert matrix for $L$ is $S$-equivalent to the null matrix.
\end{theorem}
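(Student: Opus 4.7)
The plan is to relate the $S$-equivalence class of the Seifert matrix to the structure of the Alexander module $H_1(S^3\sm L;\Z F)$. First, the equivalence of ``some'' and ``every'' in the statement is immediate from the proposition attributed to Liang quoted in Section~\ref{subsection:seifert-matrix-s-equivalence}, which asserts that all Seifert matrices of the same boundary link are $S$-equivalent. Hence it suffices to prove the biconditional for a single chosen Seifert surface.

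For the ``if'' direction, I would show that $S$-equivalence preserves the isomorphism class of the Alexander module $H_1(S^3\sm L;\Z F)$. Each of the three operations constituting $S$-equivalence has a geometric interpretation on the Seifert surface: a congruence corresponds to a change of basis of $H_1(V;\Z)$; an $S$-enlargement corresponds to attaching a standard $1$-handle to $V$ along an arc embedded in $S^3\sm V$, which does not alter $L$; and $S$-reduction is the inverse. None of these changes the free cover of $S^3\sm L$ up to homotopy equivalence rel meridians, so the Alexander module is preserved. The null matrix is the Seifert matrix of the unlink with disjoint disc Seifert surfaces, which has $\pi_1(S^3\sm \text{unlink})\cong F$ free and hence vanishing $\Z F$-homology. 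Consequently, any link whose Seifert matrix is $S$-equivalent to the null matrix has trivial Alexander module and is therefore good.

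For the ``only if'' direction, suppose $L$ is good, so $H_1(S^3\sm L;\Z F)=0$. Choose any boundary link Seifert surface $V$ for $L$ with Seifert matrix $A$, cut $S^3$ along $V$ to obtain a manifold $Y$, and run the standard Mayer--Vietoris argument in the free cover of $S^3\sm L$: this computes a presentation of $H_1(S^3\sm L;\Z F)$ whose presentation matrix is built linearly from the blocks of $A$ and the free generators $t_1,\dots,t_m$, of the shape $A - TA^T$ with $T$ a block-diagonal matrix of $t_i$'s. The goodness hypothesis translates into this presentation matrix being trivially redundant over $\Z F$. I would then argue by induction on the size of $A$: from the vanishing module, extract an element of $H_1(V;\Z)$ whose positive push-off bounds in the free cover, which geometrically yields, after a loop theorem application, a compressing disc for a $1$-handle of $V$ in $S^3\sm L$; compressing along this disc realises an $S$-reduction of $A$ and produces a new boundary link Seifert surface for $L$ of strictly smaller total genus with still-vanishing Alexander module. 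Iterating this procedure (if needed, first applying some $S$-enlargements, which is permissible by Lemma~\ref{lemma:equivalence-by-s-reduction-only}, to bring $A$ into a favourable form) eventually reduces $A$ to the null matrix.

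The main obstacle is the inductive step in the ``only if'' direction: extracting a geometric compressing disc, or equivalently an explicit $S$-reduction, from the purely algebraic vanishing of the Alexander module over the non-commutative ring $\Z F$. This is precisely what the cited work of Liang and Farber provides, via careful matrix manipulations over $\Z F$ (or over $\Q F$, which, as noted in the paragraph preceding the theorem, detects goodness equally well) that generalise the classical Levine argument for the knot case $m=1$.
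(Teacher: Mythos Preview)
The paper's route is entirely different from yours. Rather than working in $S^3$, it realizes the given Seifert matrix $A$ by a \emph{simple} boundary link $L'\subset S^{4k-1}$ with $k>1$ (Liang, Ko), notes via Levine and Sheiham that $H_{2k-1}(S^{4k-1}\sm L';\Q F)\cong H_1(S^3\sm L;\Q F)$ since both are presented by the same formula in~$A$, and then chains two high-dimensional classification facts: $L'$ is trivial iff its Seifert matrix is $S$-equivalent to null (Liang), and $L'$ is trivial iff $H_{2k-1}(S^{4k-1}\sm L';\Q F)=0$ (Farber). No Seifert-surface geometry in $S^3$ is performed at all.

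Your argument, by contrast, has a gap in each direction. In the ``if'' direction, the claim that an $S$-reduction can always be realized geometrically by compressing a $1$-handle of $V$ in $S^3$ is false: were it true, your argument would show that any link with Seifert matrix $S$-equivalent to null is itself the unlink, which is already contradicted by Alexander-polynomial-one knots. What is actually needed here is the purely algebraic statement that the $\Z F$-module presented by the Seifert matrix (via your own Mayer--Vietoris formula) is invariant under $S$-moves; that is a direct check on presentation matrices, and the geometric interpretation is a red herring. In the ``only if'' direction you correctly isolate the hard step, but you mischaracterize the references: neither Liang nor Farber produces compressing discs in $S^3$ via a loop-theorem argument or ``careful matrix manipulations over~$\Z F$'' in the sense you describe. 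Liang supplies the high-dimensional realization and unknotting theorem, and Farber the algebraic triviality criterion for high-dimensional simple links---which is exactly how the paper proceeds. A direct $3$-dimensional inductive reduction as you sketch would require passing from a homological vanishing in the free cover to a homotopical bounding statement in $S^3\sm V$, and that bridge is not provided by the cited works.
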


In the following proof, we combine results on high dimensional
boundary links due to~\cite{Liang:1977-1, Ko:1987-1, Levine:1966-1,
Sheiham:2006-1, Farber:1991-1}.

\begin{proof}
  Let $A$ be a Seifert matrix for~$L$.  Every Seifert matrix can be realised by
 a simple boundary link in an \emph{arbitrary} odd dimension (see Theorem~2
 of~\cite{Liang:1977-1}, Theorem~3.4 of~\cite{Ko:1987-1}). Thus there is a
 simple boundary link $L'$ of codimension two in $S^{4k-1}$ with $k>1$ that has
 the same Seifert matrix~$A$.  Here a boundary link in $S^{4k-1}$ is said to be
 \emph{simple} if the components have disjoint and $(2k-2)$-connected Seifert
 surfaces.

  By~\cite{Levine:1966-1,Sheiham:2006-1}, the module $H_{2k-1}(S^{4k-1}\sm L';
  \Q F)$ has a presentation determined by the Seifert matrix.  It follows
  that the modules $H_{2k-1}(S^{4k-1}\sm L';\Q F)$ and $H_1(S^{3}\sm L;\Q
  F)$ are isomorphic.

  By \cite[Theorem~3]{Liang:1977-1}, a high dimensional simple
  boundary link is a trivial link if and only if it has a Seifert
  matrix $S$-equivalent to a null matrix.  Also, by
  \cite[Theorem~6.7]{Farber:1991-1}, $L'$ is a trivial link if and
  only if $H_{2k-1}(S^{4k-1}\sm L';\Q F)=0$.  It
  follows that $H_1(S^{3}\sm L;\Q F)=0$ if and only if the Seifert
  matrix $A$ is $S$-equivalent to the null matrix.
\end{proof}

\begin{proof}[Proof of Proposition~\ref{proposition:s-reducible-seifert-matrix}]
  Combine Theorem~\ref{theorem:link-module-S-equivalence} with
  Lemma~\ref{lemma:equivalence-by-s-reduction-only}, applied with $L'$ the
  unlink and with discs as Seifert surfaces, so $A'$ is the null matrix.   This
  shows that there is a Seifert surface $V=\bigsqcup V_i$ for $L$ such that the
  associated Seifert matrix can be changed to a null matrix by $S$-reductions
  only.
\end{proof}

\section{Homotopically trivial$^+$ pairs}

In this section, we generalise the notion of homotopically trivial$^+$ links,
defined in~\cite[Section~3]{Freedman-Teichner:1995-2}, to pairs consisting of a
link and a sublink. Recall that an $m$-component link $J$ in $S^3$ is
\emph{homotopically trivial} if there is a homotopy of $J$ to the $m$-component
unlink in $S^3$ through immersions of $m$ circles with disjoint images.

\begin{definition}
  \label{definition:htplus-pair}
  For a sublink $K$ of a link $J$ in $S^3$, we say that the pair $(J,K)$ is
  \emph{homotopically trivial\/$^+$} if $K\cup J_i^+$ is homotopically trivial
  for every component $J_i$ of $J$, where $J_i^+$ is a zero linking parallel
  copy of~$J_i$.
\end{definition}

\begin{remark}
  \phantomsection\label{remark:htplus-for-pair}
  \leavevmode\Nopagebreak
  \begin{enumerate}

    \item\label{item:htplus-good-basis-vs-pair} By inspection of the definition,
    a good basis $\{a_i,b_i\}$ on a boundary link Seifert surface $V$ is
    homotopically trivial$^+$ if and only if the pair $(\bigsqcup
    (a_i^{\mathstrut} \cup b_i'),\bigsqcup b_i')$ is homotopically trivial$^+$,
    where $b_i'$ is a normal translate of $b_i \subset V$ satisfying
    $\lk(a_i^{\mathstrut},b_i')=0$.

    \item\label{item:htplus-pair-vs-link}  The pair $(J,J)$ is homotopically
    trivial$^+$ if and only if $J$ is homotopically trivial$^+$ in the sense
    of \cite{Freedman-Teichner:1995-2}. More generally,
    suppose that $J$ is a \emph{ramification} of a link $K$, that is $J$
    consists of $K$ together with zero linking parallel copies of a (not
    necessarily proper) sublink of~$K$. Then $(J,K)$ is homotopically
    trivial$^+$ if and only if $K$ is homotopically trivial$^+$.

  \end{enumerate}
\end{remark}

The next lemma extends \cite[Lemma~3.2]{Freedman-Teichner:1995-2} to
homotopically trivial$^+$ link pairs.

\begin{lemma}
  \label{lemma:homotopy-trivial+}
  Let $(J,K)$ be a homotopically trivial\/$^+$ pair. Then there are two
  collections of immersed discs $\{\Delta_i\}$ and $\{\Delta_k^+\}$ in $D^4$
  satisfying that $\d \Delta_i=K_i$, $\d \Delta_k^+=J_k^+$, $\Delta_i\cap
  \Delta_k=\emptyset$ for all $i\neq k$ and $\Delta_i^{\vphantom{+}}\cap
  \Delta_k^+=\emptyset$ for all $i$ and~$k$.
\end{lemma}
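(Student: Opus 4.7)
The plan is to apply the standard principle ``a homotopically trivial link bounds pairwise disjoint immersed discs in $D^4$'' once to $K$ and once to each extended link $K\cup J_k^+$, and then to reconcile the two disc systems for $K$ via Whitney moves in $D^4$.

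First, observe that $K$ is itself homotopically trivial, since it is a sublink of the homotopically trivial link $K\cup J_1^+$. Tracing a null link-homotopy of $K$ through the collar $S^3\times[0,1/2]\subset D^4$ and capping the resulting unlink with pairwise disjoint embedded discs in the inner $D^4$ yields pairwise disjoint immersed discs $\{\Delta_i\}$ with $\partial\Delta_i=K_i$. Second, for each component $J_k$ of $J$, the hypothesis of the lemma supplies a null link-homotopy of $K\cup J_k^+$, and the same construction provides a pairwise disjoint collection $\{\widetilde{\Delta}_i^{(k)}\}_i\cup\{D_k^+\}$ of immersed discs bounding $K\cup J_k^+$ in $D^4$; in particular $D_k^+$ is disjoint from the alternative disc system $\{\widetilde{\Delta}_i^{(k)}\}$ for $K$.

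The remaining task is to modify each $D_k^+$ to a disc $\Delta_k^+$ with the same boundary $J_k^+$ that is disjoint from the chosen $\{\Delta_i\}$ rather than from $\{\widetilde{\Delta}_i^{(k)}\}$. Since $\pi_2(D^4)=0$, the discs $\widetilde{\Delta}_i^{(k)}$ and $\Delta_i$ are homotopic rel boundary. Choose such a homotopy; its image is a $3$-dimensional subset of $D^4$ that, by general position, meets $D_k^+$ transversely in a $1$-manifold whose boundary lies entirely on $D_k^+\cap\Delta_i$. Each arc of this $1$-manifold pairs up two transverse intersections of $D_k^+$ with $\Delta_i$ and, together with the trace of the homotopy, supplies an immersed Whitney disc for them. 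Performing Whitney moves along these discs removes all intersections of $D_k^+$ with $\Delta_i$. Since every $\Delta_l$ for $l\ne i$ has codimension two in $D^4$, the Whitney discs and their tracks can be perturbed in general position to miss $\bigcup_{l\ne i}\Delta_l$, so the only new intersections introduced are self-intersections of $D_k^+$ or intersections of $D_k^+$ with some other $D_j^+$--both of which are allowed by the conclusion. Iterating over $i$, and then over $k$, produces the required discs $\Delta_k^+$, each disjoint from the entire collection $\{\Delta_i\}$.

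The main technical obstacle is the $4$-manifold bookkeeping in the last step: one must ensure that the successive Whitney moves eliminating $D_k^+\cap\Delta_i$ do not create new intersections with previously handled discs $\Delta_l$ ($l\ne i$), nor destroy disjointness with $\Delta_i$'s treated at earlier stages of the iteration. This is handled by invoking general position against the codimension-two configuration $\bigcup_l\Delta_l\subset D^4$ when selecting each Whitney disc and performing each move.
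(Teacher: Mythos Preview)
There is a genuine gap in your final step. The claim that ``the Whitney discs and their tracks can be perturbed in general position to miss $\bigcup_{l\ne i}\Delta_l$'' is false: the Whitney discs are $2$-dimensional, the $\Delta_l$ are $2$-dimensional, and $2+2=4$, so in $D^4$ a $2$-disc and a $2$-surface generically meet in isolated points that \emph{cannot} be removed by perturbation. This is precisely the central difficulty of $4$-dimensional topology, not a bookkeeping issue. Each Whitney move on $D_k^+$ cancelling a pair in $D_k^+\cap \Delta_i$ will therefore typically create two new intersections of $D_k^+$ with every $\Delta_l$ that the Whitney disc happens to hit, and your iteration over $i$ has no reason to terminate. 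All that survives your argument is the vanishing of the algebraic intersection numbers $D_k^+\cdot\Delta_i$, which only shows that $[J_k^+]$ lies in the commutator subgroup of $\pi_1\big(D^4\sm \bigsqcup_i \Delta_i\big)$; you need it to be trivial there.

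The paper's two proofs address exactly this obstruction. The first modifies the discs $\Delta_i$ by finitely many finger moves so that the fundamental group $G$ of their complement equals its own Milnor group $MG$, and then uses invariance of the Milnor group under link null-homotopy (comparing with the complement of the $\widetilde\Delta_i^{(k)}$, in your notation) to deduce $[J_k^+]=1$ in~$G$. The second is a direct inductive construction in a collar $S^3\times I$: at stage $j$ one uses a null-homotopy of $K\cup J_j^+$ to carry $J_j^+$ into a fixed $3$-ball $B$, and then \emph{reverses} that homotopy on $K$ and on the remaining components $J_{j+1}^+,\ldots,J_n^+$ to return them to their original positions, so the next stage can proceed. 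Either route replaces the false general-position step with genuine control over where new intersections are allowed to occur.
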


We will give two proofs of Lemma~\ref{lemma:homotopy-trivial+}.  The first is
essentially identical to the proof
of~\cite[Lemma~3.2]{Freedman-Teichner:1995-2}, which uses the Milnor group
introduced in~\cite{Milnor:1954-1}.  The second proof gives
an elementary geometric construction of the desired discs without using algebra.
We believe that this is a useful addition to the literature.  It uses a homotopy
reversing method that appeared in~\cite[Section~3.4]{Krushkal-15}.

\begin{proof}[Proof using the Milnor group]

  Let $m$ be the number of components of~$K$.  Choose disjointly immersed discs
  $\Delta_i$ in $D^4$ bounded by~$K_i$ for $i=1,\ldots,m$, using that $K$ is
  homotopically trivial.  Let $\pi=\pi_1(S^3\sm \bigsqcup K_i)$, $G=\pi_1(D^4\sm
  \bigsqcup \Delta_i)$ and $F$ be the free group on $x_1,\ldots,x_m$.  Let $F\to
  \pi\to G$ be the map sending $x_i$ to a meridian of~$K_i$, and denote the
  image of $x_i$ in $\pi$ and $G$ by the same symbol~$x_i$. For $\Gamma = F$,
  $\pi$ and $G$, the Milnor group $M\Gamma$ is defined to be the quotient of
  $\Gamma$ modulo relators $[x_i^g,x_i^{\vphantom{g}}]$, $i=1,\ldots,m$, $g\in
  \Gamma$~\cite{Milnor:1954-1}. The group $M\Gamma$ is finitely generated and
  nilpotent~\cite{Milnor:1954-1}, and consequently finitely presented.  For
  $\Gamma=G$, since a relator $[x_i^g,x_i]$ is realised by a finger move
  on~$\Delta_i$, we may assume that $MG=G$ by applying finitely many finger
  moves to the discs. Moreover, the following facts are known: $M\pi \cong MF$
  since $K$ is homotopically trivial~\cite[Section~5]{Milnor:1954-1}, and $K\cup
  J_k^+$ is homotopically trivial if and only if $[J_k^+]$ is trivial
  in~$M\pi$~\cite[Section~5]{Milnor:1954-1}. Also,  $MF\cong MG$ since
  $\bigsqcup\Delta_i$ is a link
  null-homotopy~\cite[Lemma~2.6]{Freedman-Teichner:1995-2}. It follows that
  $[J_k^+]$ is trivial in $MG=G$ for all~$k$. Therefore there exist immersed
  discs $\Delta_k^+$ (which may intersect each other) in $D^4\sm \bigsqcup
  \Delta_i$ satisfying $\partial\Delta_k^+ = J_k^+$.
\end{proof}

\begin{proof}[Constructive geometric proof]

Let $n$ be the number of components of $J$ and let $m$ be the number of
components of~$K$.  Let $J^+$ be the union of pushed off parallel copies $J_k^+$
of the components of~$J$.

We work by induction on the components of $J^+$ to construct annuli in $S^3
\times I$, which will then be capped off by discs to obtain the desired immersed
discs, after embedding $S^3 \times I \subset D^4$ as a collar on the boundary.
For the induction, $j$ will range from $1$ to $n$, corresponding to the indices
of the components of~$J$.  Fix a 3-ball $B$ in $S^3$ disjoint
from~$K\cup J^+$. Suppose for the inductive hypothesis that we have constructed
a level preserving immersion of $n+m$ annuli $\{A_i\}_{i=1}^m$ and
$\{A_{k}^+\}_{k=1}^n$  in $S^3 \times I = S^3 \times [0,1]$ with $A_i\cap (S^3
\times \{0\}) = K_i$ for $i=1,\ldots,m$ and $A_k^+ \cap (S^3 \times \{0\}) =
J^+_k$ for $k=1,\ldots,n$, such that

\begin{enumerate}[label=(\roman*)]
\item\label{item:intersection-conditions-1} $A_i \cap A_{\ell}=\emptyset$ for
all $i\neq \ell$ with $1 \leq i,\,\ell \leq m$;
\item\label{item:intersection-conditions-2} $A_i\cap A_k^+=\emptyset$ for all
$i$ and $k$ with $1 \leq i \leq m$ and $1 \leq k \leq n$;
\item\label{item:some-lie-in-3-ball} $(A_1^+\cup\cdots \cup A_{j-1}^+) \cap (S^3
\times \{1\})$ is a trivial link contained in $B\times\{1\}$, while
$A_1,\ldots,A_m$ and $A_j^+,\ldots,A_n^+$ are disjoint from $B\times\{1\}$; and
\item\label{item:components-where-they-started}
$A_i\cap (S^3 \times \{1\}) = K_i$ for $i=1,\ldots,m$ and $A_k^+ \cap (S^3
\times \{1\}) = J^+_k$ for $k=j,\ldots,n$.
\end{enumerate}

We seek to extend these annuli in $S^3 \times [0,4]$ so that we have the same
property for $j$ replaced by $j+1$.   Then we will rescale the interval $[0,4]$
to $[0,1]=I$ to achieve the inductive step.  In the base case that $j=1$, take
the annuli to be a product concordance.

In what follows, for brevity, we denote extensions of $A_i$ and $A_k^+$ by $A_i$
and~$A_k^+$.  For $k=1,\ldots,j-1$, extend $A_k^+$ by attaching the product
$J_k^+ \times[1,2]$, which lies in $B\times[1,2]$.  Use the hypothesis that
$K\cup J_j^+$ is homotopically trivial to find disjointly immersed level
preserving annuli extending $A_1,\dots,A_m$ and $A_j^+$ in $S^3 \times [1,2]$,
ending in a trivial link in $S^3 \times \{2\}$.  We may and shall assume that
these annuli are disjoint from $B\times[1,2]$, since a link null-homotopy for
$K\cup J_j^+$ in $S^3$ can be arranged to avoid~$B$.  Furthermore, since link
homotopy is generated by crossing changes and ambient isotopy, and since
crossing changes can be assumed to be performed along arcs disjoint from
$J_{j+1}^+,\ldots,J_n^+$, the annuli $A_{j+1}^+,\ldots,A_n^+$ extend to
disjointly embedded annuli in $S^3\times[1,2]$, ending in some link in $S^3
\times \{2\}$.  The annuli $A_{j+1}^+,\ldots,A_n^+$ do not meet $B\times[1,2]$
and the above (extensions of)
$A_1^{\vphantom{+}},\ldots,A_m^{\vphantom{+}},A_j^+$.

Next, we proceed to extend the annuli in $S^3\times[2,3]$.  Choose an isotopy,
in $S^3\times\{2\}$, of the end of $A_j^+$ in $S^3 \times \{2\}$ into the
ball~$B$.  We may assume that the isotopy does not meet $A_1,\ldots,A_m$ and
$A_1^+,\ldots,A_{j-1}^+$ in $S^3\times \{2\}$.  Use the level preserving
embedded annulus corresponding to this isotopy to extend $A_j^+$ in
$S^3\times[2,3]$. Extend $A_i^{\vphantom{+}}$ and $A_k^+$, except for $k=j$,
into $S^3 \times [2,3]$ by attaching product annuli. Then $A_j^+$ is disjoint
from the annuli $A_i$ and $A_k^+$ for $k<j$, while $A_j^+$ may intersect $A_k^+$
for $k >j$. Note that these intersections are permitted in the inductive
hypothesis and ultimately in the discs that we aim to construct.

Finally, extend the annuli $A_i^{\vphantom{+}}$, $A_k^+$ once more, into $S^3
\times [3,4]$, as follows.  For the components inside $B \times \{3\}$, that is
the boundary components of the annuli $A_1^+,\dots,A_j^+$ in $S^3 \times \{3\}$,
extend them by attaching product annuli. For the remaining components, perform
the reverse of the homotopies applied thus far.  That is, for $A$ one of the
annuli $A_1,\dots,A_m$ or $A_k^+$ with $k>j$, which lies in $S^3\times[0,3]$ for
now, extend $A$ by attaching the image of $A\cap (S^3\times[1,3])$ under the
orientation reversing diffeomorphism $S^3\times[1,3] \to S^3\times[3,4]$, $(x,t)
\mapsto (x, \frac 92 - \frac t2)$.  We have arranged that the boundary
components of the annuli $A_{j+1}^+, \dots, A_n^+$ and $A_1, \dots, A_m$ that
lie in $S^3 \times \{4\}$ coincide with $J^+_{j+1},\dots,J^+_n$ and
$K_1,\dots,K_m$, so that after rescaling $[0,4]$ to $[0,1]$, condition~\ref{item:components-where-they-started} is satisfied, with $j$
replaced by $j+1$.  It is clear that no new intersections between any pair of
the annuli $A_{j+1}^+,\ldots,A_n^+$ and $A_1,\dots,A_m$ are introduced in
$S^3\times[3,4]$, unless that pair already intersected in $S^3 \times [1,3]$.
For $A_1^+,\ldots,A_j^+$, observe that their extensions in $S^3\times[3,4]$ are
contained in $B\times[3,4]$, while other annuli are disjoint from
$B\times[1,4]$, by the previous two steps.  So, there are no new intersections
involving $A_1^+,\ldots,A_j^+$.  This shows that
conditions~\ref{item:intersection-conditions-1}
and~\ref{item:intersection-conditions-2} hold for the annuli we have
constructed. Finally, we have moved the component $J_j^+$ into the ball $B$, and
so condition~\ref{item:some-lie-in-3-ball} holds with $j$ replaced by $j+1$.
This completes the proof of the inductive step.

After the induction has been run for all $j$, we obtain a collection of immersed
annuli in $S^3 \times I$ satisfying the conditions above for $j=n+1$.  Cap off
the annuli $\{A_k^+\}_{k=1}^n$ with discs inside $B \times \{1\}$.  Then cap
$S^3 \times I$ off with a 4-ball, to obtain $S^3 \times I \cup D^4 \cong D^4$.
It remains to cap off the annuli $\{A_i\}_{i=1}^m$.  The intersection of these
annuli with $S^3 \times \{1\}$ form the link~$K$. Since $K$ is homotopically
trivial, it can be capped off with disjointly immersed discs in the capping-off
$D^4$.  Let $\Delta_i$ and $\Delta_k^+$ be the discs obtained from capping off
$A_i$ and $A_k^+$ respectively. The collections of discs $\{\Delta_i\}$ and
$\{\Delta_{k}^+\}$ satisfy the desired disjointness properties.
\end{proof}

\section{Freely slicing a boundary link by finding a slice disc exterior}

Let $M_L$ denote the closed 3-manifold obtained by performing zero framed
surgery on the link $L\subset S^3$. A boundary link $L$ in $S^3$ is \emph{freely
slice} if $L$ bounds a collection of disjoint, topologically embedded,
locally flat discs $D$ in $D^4$ such that $\pi_1(D^4\sm D)$ is a free group
generated by the meridians of~$L$.  We recall the following standard
proposition.

\begin{proposition}\label{proposition:slice-disc}
  An $m$-component boundary link $L$ is freely slice if and only if there is a
  compact topological $4$-manifold $X$ with $\d X= M_L$ such that $\pi_1(X)$ is
  the free group on the $m$ meridians and $H_2(X;\Z)=0$.
\end{proposition}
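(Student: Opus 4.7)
The plan is to prove the two directions separately. The forward direction is a direct construction combined with Mayer--Vietoris, while the reverse direction relies on Freedman's theorem that a compact, contractible, topological $4$-manifold with boundary $S^3$ is homeomorphic to $D^4$.

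For the forward implication, assume $L$ bounds disjoint locally flat slicing discs $D = D_1 \sqcup \cdots \sqcup D_m$ in $D^4$ with $\pi_1(D^4 \setminus D) \cong F$ generated by meridians. Set $X := D^4 \setminus \nu D$. I would identify $\partial X$ with $M_L$ by decomposing $\partial X = (S^3 \setminus \nu L) \cup \bigsqcup_i (D_i \times S^1)$: the longitude $\partial D_i \times \{\mathrm{pt}\}$ bounds the meridional disc $D_i \times \{\mathrm{pt}\}$ in the new solid torus $D_i \times S^1$, so the gluing realises $0$-framed surgery on $L$. The hypothesis gives $\pi_1(X) \cong F$, and $H_2(X;\Z) = 0$ follows from Mayer--Vietoris applied to $D^4 = X \cup \nu D$, noting that $X \cap \nu D \simeq \bigsqcup_i D_i \times S^1$ and that the map $H_1(X \cap \nu D) \cong \Z^m \to H_1(X) \cong \Z^m$ sends meridians to meridians and is therefore an isomorphism.

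For the reverse direction, given $X$ satisfying the three conditions, attach $m$ two-handles $h_1, \ldots, h_m$ to $X$ along the meridians $\mu_i \subset M_L = \partial X$ with the $0$-framing coming from the solid torus glued in to produce $M_L$; set $X' := X \cup h_1 \cup \cdots \cup h_m$. Surgery on the $\mu_i$ with this framing inverts the original surgery producing $M_L$, so $\partial X' = S^3$. Since the $\mu_i$ normally generate $\pi_1(X) \cong F$, one has $\pi_1(X') = 1$. The long exact sequence of the pair $(X', X)$, together with $H_2(X;\Z)=0$ and the fact that the boundary map $H_2(X', X) \cong \Z^m \to H_1(X) \cong \Z^m$ is an isomorphism (sending each handle generator to its attaching meridian), forces $H_2(X';\Z)=0$ and $H_1(X';\Z) = 0$. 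Moreover $H_3(X';\Z) \cong H^1(X', \partial X';\Z) = 0$ by Poincar\'e--Lefschetz duality combined with the long exact sequence of $(X', S^3)$, and $H_4(X';\Z) = 0$ since $X'$ has nonempty boundary. Thus $X'$ is contractible.

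Invoking Freedman's theorem, $X'$ is homeomorphic to $D^4$. The cocores of the two-handles $h_i$ are then properly embedded, locally flat discs in $X' \cong D^4$; their boundaries on $\partial X' = S^3$ are the new core curves appearing under the reversed surgery, namely the components $L_i$ of $L$. These cocores are therefore the desired slicing discs. Finally, the complement $X' \setminus \bigsqcup_i \nu(\mathrm{cocore}\, h_i)$ deformation retracts onto $X$, so its fundamental group is the free group $F$ on the meridians, confirming that $L$ is freely slice. The only substantive input beyond handle-theoretic manipulation and routine homological algebra is Freedman's topological $4$-dimensional Poincar\'e-type theorem; that is the heart of the matter and would be the main obstacle in its absence.
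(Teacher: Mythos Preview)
Your proof is correct and follows essentially the same approach as the paper: take the slice-disc exterior for the forward direction, and for the converse attach $2$-handles along the meridians, verify the result is a contractible $4$-manifold with boundary $S^3$, and invoke Freedman's theorem to identify it with $D^4$ so that the cocores become free slice discs. Your write-up is in fact more explicit than the paper's about the homological computations (Mayer--Vietoris for $H_2(X)=0$, the long exact sequence and Poincar\'e--Lefschetz duality for contractibility), which is a virtue.
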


We remark that such a 4-manifold $X$ is homotopy equivalent to~$\bigvee^m
S^1$ by~\cite[Proposition~11.6C(1)]{Freedman-Quinn:1990-1}.

\begin{proof}[Proof of Proposition~\ref{proposition:slice-disc}]

  If $L$ is freely slice, the exterior of the slicing discs has the properties
  required of~$X$ (we are using that locally flat embedded discs have normal
  bundles, by~\cite[Section~9.3]{Freedman-Quinn:1990-1}, in order to consider
  the exterior). For the converse, suppose there is a 4-manifold $X$ satisfying
  the stated properties. Attach $m$ $2$-handles $D^2 \times D^2$ with core
  $D^2\times\{0\}$ to $X$, with attaching circles $\partial D^2 \times \{0\}$
  glued along the meridians of $L$, with zero framings induced from~$S^3$. The
  resulting 4-manifold $Y$ is contractible, and its boundary $\d Y$ is
  homeomorphic to~$S^3$. By Freedman \cite{Freedman:1982-1}, $Y$ is homeomorphic
  to~$D^4$. The belt spheres $\{0\} \times \partial D^2$ of the 2-handles are
  sent to $L\subset S^3$ under the homeomorphism $\d Y\to S^3$, and the cocores
  $\{0\} \times D^2$  of the 2-handles provide slice discs for~$L$. Then $X$ is
  the exterior of the slice discs and $\pi_1(X)$ is free, so $L$ is freely
  slice.
\end{proof}

We will prove Theorem~\@\ref{theorem:main} by constructing a topological
$4$-manifold $X$ satisfying the criteria of
Proposition~\ref{proposition:slice-disc}.  In order to find such an~$X$, we will
need to apply a theorem of Freedman and Quinn from
~\cite[Chapter~6]{Freedman-Quinn:1990-1}. Let us introduce the terminology
necessary to state this theorem.

A \emph{model transverse pair} is defined to be two copies of $S^2\times D^2$
plumbed together once.  In other words, a regular neighbourhood of $(S^2 \times
\{p\}) \cup (\{p\} \times S^2) \cong S^2\vee S^2$ in $S^2\times S^2$, where $p
\in S^2$.  Take an arbitrary number of model transverse pairs
$N_1,\ldots,N_\ell$, perform some number of further plumbings, which may or may
not be self plumbings, and embed the result into a (topological) 4-manifold~$M$.
A topological embedding is simply a map that is a homeomorphism onto its image.
Call the resulting map $f\colon \bigsqcup_i N_i\to M$ an \emph{immersed union of
transverse pairs}. We say that $f$ has \emph{algebraically trivial
intersections} if the further plumbings introduced by $f$ can be arranged in
pairs with Whitney discs immersed in~$M$ (the Whitney discs are a priori
permitted to intersect the image of the $N_i$).  In addition, we say that $f$ is
\emph{$\pi_1$-null} if the inclusion induced map $\pi_1\big(f\big(\bigsqcup_i
N_i\big)\big)\to \pi_1(M)$ is trivial for every choice of basepoint
in~$\bigsqcup_i N_i$.

A $5$-dimensional \emph{$s$-cobordism} rel~$\d$ is a triple $(Z;\d_0 Z,\d_1 Z)$,
where $Z$ is a compact (topological) $5$-manifold whose boundary is split into
three pieces $\d Z \cong \d_0 Z\cup \d_1 Z\cup (P\times [0,1])$ such that $\d_i
Z\cap (P\times [0,1]) = \partial(\partial_i Z) = P\times\{i\}$ for $i=0,1$, and
each inclusion $\d_i Z\hookrightarrow Z$ is a simple homotopy equivalence.  We
say that $\d_0 Z$ and $\d_1 Z$ are \emph{$s$-cobordant rel~$\partial$}.

The next theorem says that, under a strong $\pi_1$-null hypothesis, we may
perform \emph{surgery up to $s$-cobordism}.

\begin{theorem}[{\cite[Theorem~6.1]{Freedman-Quinn:1990-1}}]
  \label{theorem:s-cob-to-embedding}
  Let $M$ be a compact connected topological $4$-manifold with
  \textup{(}possibly empty\textup{)} boundary. Let $f\colon \bigsqcup_i N_i\to
  M$ be an immersed union of transverse pairs. If $f$ is $\pi_1$-null and has
  algebraically trivial intersections, then $f$ is $s$-cobordant to an
  embedding.  That is, there is an $s$-cobordism $(Z;M,M')$ rel~$\d$ and a map
  $F\colon \bigsqcup_i N_i \times I \to Z$ such that $F|_{\bigsqcup_i N_i\times
  \{0\}}=f$ and $F|_{\bigsqcup_i N_i \times \{1\}}$ is an embedding of
  $\bigsqcup_i N_i$ into~$M'$.
\end{theorem}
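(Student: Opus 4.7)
The plan is to reduce the statement to the disc embedding theorem of Freedman--Quinn and package the output in the five-dimensional trace of the resulting Whitney moves.

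First, I use the $\pi_1$-null hypothesis to localise the problem. Since $\pi_1(f(\bigsqcup_i N_i)) \to \pi_1(M)$ is trivial, after a small homotopy I may assume $f(\bigsqcup_i N_i)$ lies inside a regular neighbourhood $U\subset M$ whose inclusion $U \hookrightarrow M$ factors through a disc in $M$, in particular is $\pi_1$-trivial. All of the Whitney-disc constructions below will be performed inside $U$, where one is effectively working in a simply-connected 4-manifold and the machinery of \cite[\S5]{Freedman-Quinn:1990-1} applies.

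Second, the algebraically trivial intersections hypothesis provides, for each pair of excess plumbings (intersections other than the single plumbing internal to each model transverse pair), a cancelling partner of opposite sign, together with an immersed Whitney disc $W$ in $M$ pairing them. A priori the $W$ may intersect each other and the spheres of the $N_i$. Crucially, the transverse pair structure of each $N_i$ means every immersed sphere comes with a geometrically dual $2$-sphere, namely the other sphere of the same transverse pair. These dual spheres are the input required by the disc embedding theorem \cite[Theorem~5.1B]{Freedman-Quinn:1990-1}: combined with the $\pi_1$-null condition inside $U$, it upgrades the collection of immersed Whitney discs $\{W\}$ to a collection of disjointly flat-embedded Whitney discs $\{W'\}$ whose interiors are disjoint from each other and from the images of the~$N_i$.

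Once disjointly embedded Whitney discs are available, standard Whitney moves cancel every excess intersection, yielding an honest embedding. To produce the claimed $s$-cobordism, I realise each Whitney move as a modification in a five-dimensional collar: starting with the product $s$-cobordism $M\times[0,1]$, reshape it using the Whitney constructions to obtain $(Z;M,M')$ rel~$\partial$, and define $F\colon \bigsqcup_i N_i\times I \to Z$ by $f$ on $M\times\{0\}$, by the final embedding on $M'$, and by tracking the Whitney moves through the five-dimensional trace. That the resulting cobordism is an $s$-cobordism follows because it is built from cancelling $2$-/$3$-handle pairs in the collar. The single genuine obstacle is absorbed into the disc embedding theorem itself: arranging the Whitney discs to be disjointly flat-embedded requires the full Casson-handle/capped-grope technology of \cite{Freedman-Quinn:1990-1}, and everything outside of that black box is organisational.
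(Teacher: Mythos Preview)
The paper does not prove this theorem; it is quoted verbatim as \cite[Theorem~6.1]{Freedman-Quinn:1990-1} and used as a black box. So there is no ``paper's own proof'' to compare against, and your proposal must stand on its own as a sketch of the Freedman--Quinn argument.

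Your sketch has a genuine gap at the localisation step. The $\pi_1$-null hypothesis says only that each loop in the image of $f$ bounds a disc \emph{somewhere in $M$}; it does not say, and does not imply, that $f(\bigsqcup_i N_i)$ can be homotoped into a regular neighbourhood $U$ whose inclusion into $M$ is $\pi_1$-trivial. The null-homotopies of loops in the image may run all over $M$ and intersect the image repeatedly, so there is no reason to expect a simply-connected submanifold containing everything. This is not a technicality: if such a $U$ always existed, one could invoke the simply-connected disc embedding theorem directly and obtain an embedding in $M$ itself, not merely one up to $s$-cobordism. The appearance of the $s$-cobordism in the conclusion is precisely the signal that the argument cannot be reduced to Whitney moves inside~$M$.

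Relatedly, your final paragraph is circular. You say the $s$-cobordism is the trace of Whitney moves performed in a collar $M\times[0,1]$, but a Whitney move in a collar is just a Whitney move in $M$, and if those were available you would already have an embedding in~$M$. In the actual Freedman--Quinn proof, the $s$-cobordism arises from genuinely new handles attached to $M\times[0,1]$ (coming from the capped-grope replacement and reembedding machinery of their Chapter~6), not from a product structure. The $\pi_1$-null condition is used there in a more delicate way: roughly, to ensure that the group elements carried by intersection points can be trivialised so that capped gropes of arbitrary height can be built, without ever assuming that $\pi_1(M)$ is a good group.
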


In the next section we will prove the following result, which forms the core of
the proof of Theorem~\@\ref{theorem:main}\@.

\begin{proposition}\label{proposition:construction-of-W}
  Let $L$ be an $m$-component boundary link with a boundary link Seifert surface
  admitting a homotopically trivial\/$^+$ good basis. Then there exists a
  compact smooth $4$-manifold $W$ with $\partial W = M_L$, $\pi_1(W)$ free on
  the $m$ meridians of~$L$, and $H_2(W;\Z) \cong \Z^{2g}$ for some $g\ge 0$.
  Moreover, there is a basis for $H_2(W;\Z)$ represented by a $\pi_1$-null
  immersed union of transverse pairs with algebraically trivial intersections.
\end{proposition}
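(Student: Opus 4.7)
The plan is to construct $W$ inside $D^4$ as (essentially) the exterior of an immersed slice-disc system for $L$ obtained by surgering the boundary link Seifert surface along one half of the good basis, extending the construction used for Whitehead doubles of homotopically trivial$^+$ links in~\cite{Freedman-Teichner:1995-2}. First, push $V = \bigsqcup V_i$ properly into $D^4$ with $\partial V = L$. By Remark~\ref{remark:htplus-for-pair}, the homotopically trivial$^+$ good basis hypothesis yields a homotopically trivial$^+$ pair $(J, K)$ with $K = \bigsqcup_i b_i'$ and $J = \bigsqcup_i (a_i \cup b_i')$. Applying Lemma~\ref{lemma:homotopy-trivial+} produces in $D^4$ pairwise disjoint immersed discs $\{\Delta_i^K\}$ bounded by the $b_i'$, together with immersed discs $\{\Delta_j^a\}$ bounded by push-offs $a_j^+$ of the $a_j$, such that $\Delta_i^K \cap \Delta_j^a = \emptyset$ for all $i,j$.

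Next, after tubing any intersections of the $\Delta_j^a$ (resp.\ $\Delta_i^K$) with $V$ back into $V$ itself (at the expense of additional self-intersections among the $\Delta^a$'s and $\Delta^K$'s), I would surger $V$ in $D^4$ along the $a_j$ using $\Delta_j^a$: remove a tubular neighborhood of each $a_j$ in $V$ and glue in two parallel copies of $\Delta_j^a$, producing an immersed 2-complex $D = \bigsqcup D_i$ with $\partial D_i = L_i$. The diagonal blocks of~\eqref{equation:s-reducible-matrix} force $\theta(a_j, a_j) = 0$, so the surface framing on each $a_j$ agrees with the zero framing. Define $W$ as the smooth 4-manifold obtained from $D^4$ by removing an open regular neighborhood of $D$ and performing an appropriate resolution at each double point, arranged so that $\partial W = M_L$.

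A Van Kampen computation should give $\pi_1(W) \cong F_m$ on the meridians of $L$: the surgery kills each $a_j$, each $b_j'$ is capped off by the disjoint disc $\Delta_i^K$ inside $W$, and no further relations appear. For $H_2(W;\Z)$, each good-basis pair $(a_i,b_i)$ contributes a transverse pair $N_i = S_i^a \cup S_i^b$, where $S_i^a$ is the sphere built by joining two parallel copies of $\Delta_i^a$ via the normal disc of $a_i$ in the surgery, and $S_i^b$ is the sphere assembled from the surgered remnant of $b_i$ in $V$ together with $\Delta_i^K$. The algebraic intersection satisfies $S_i^a \cdot S_i^b = a_i \cdot b_i = 1$, while $N_i$ and $N_j$ for $i \neq j$ are disjoint by the disjointness of the disc systems; this realises a basis for $H_2(W;\Z) \cong \Z^{2g}$ by an immersed union of transverse pairs.

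The main obstacle is verifying the $\pi_1$-nullness of each transverse pair $N_i$ together with the algebraic triviality of the residual intersections. Each sphere in $N_i$ is assembled from pieces of $V$ and immersed discs, and showing that the inclusion-induced map $\pi_1(N_i) \to \pi_1(W)$ vanishes requires that every loop carried by these pieces is null-homotopic in $W$. This is precisely where the full strength of the homotopically trivial$^+$ hypothesis enters: the Milnor-group arguments underlying Lemma~\ref{lemma:homotopy-trivial+} provide null-homotopies that pass only through the meridional generators of $\pi_1(W)$. Likewise, pairing up the self-intersections and mutual intersections of the $\Delta_j^a$ by Whitney discs to obtain algebraically trivial intersections between the pairs is made possible precisely by the disjointness $\Delta_i^K \cap \Delta_j^a = \emptyset$, which supplies unobstructed paths along which Whitney discs can be constructed.
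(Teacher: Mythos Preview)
Your overall strategy --- build a $4$-manifold $W$ with free $\pi_1$ and represent a basis of $H_2$ by immersed transverse pairs assembled from the discs of Lemma~\ref{lemma:homotopy-trivial+} --- is the right one, and is close in spirit to the paper's approach.  However, several steps are genuine gaps rather than details.

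First, the construction of $W$ itself is not pinned down.  The exterior in $D^4$ of an \emph{immersed} disc system does not have boundary $M_L$; each double point contributes an $S^1\times S^2$ summand.  Your phrase ``appropriate resolution at each double point'' hides exactly the issue, and without specifying it you cannot compute $\pi_1(W)$ or $H_2(W)$.  The paper sidesteps this entirely by giving $W$ as an explicit Kirby diagram drawn in a bicollar of~$V$ (dotted circles $\beta_i$ for $1$-handles, zero-framed $2$-handles along $a_i$, $\gamma_i$, $\delta_i$), and verifies $\partial W\cong M_L$ by handle slides.

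Second, your dual sphere $S_i^b$ is underspecified, and this matters.  In the paper's construction the analogue of $S_i^b$ is obtained from a torus $T_i$ containing $b_i$, and turning a torus into a sphere requires surgering along \emph{two} discs.  The paper uses both $\Delta_i$ (bounded by $b_i'$) and an additional disc $\Delta_{g+i}^+$ bounded by a further parallel $b_i'^+$; your construction invokes only $\Delta_i^K$.  Relatedly, your claim that $N_i\cap N_j=\emptyset$ for $i\neq j$ is false: Lemma~\ref{lemma:homotopy-trivial+} gives no disjointness among the $\Delta_j^a$, so the spheres $S_i^a$ will in general intersect one another.  The paper does not claim disjointness here either; it only claims algebraically trivial intersections, and verifies this using that all intersections occur among discs in the simply connected $D^4$.

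Third, and most seriously, the $\pi_1$-nullity argument is the heart of the matter and your paragraph does not address it.  The danger is a loop that passes once through the distinguished transverse point $a_i\cap b_i$, travels through $\Delta_i^a$ to an intersection with the disc used on the $b$-side, and returns.  Such a loop is not obviously null-homotopic in~$W$.  The paper handles this by a delicate geometric choice: the second capping disc for the torus is bounded by $b_i'^+$, a parallel taken \emph{between} $b_i$ and $b_i'$, so that an arc $c_i$ joining $a_i\cap b_i$ to $b_i'^+$ on the torus misses all the $1$-handles of~$W$.  This forces a deformation retract of the relevant $2$-complex to lie entirely in $D^4$, whence $\pi_1$-nullity.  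The paper explicitly remarks that this subtlety is absent in the Whitehead-double case treated in~\cite{Freedman-Teichner:1995-2} and is peculiar to the general good-basis setting; your appeal to ``Milnor-group arguments underlying Lemma~\ref{lemma:homotopy-trivial+}'' does not supply it.
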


Assuming the proposition, we give the proof of our  main theorem, whose
statement we recall for the convenience of the reader. \emph{Suppose that $L$ is
a good boundary link that has a Seifert surface admitting a homotopically
trivial\/$^+$ good basis. Then $L$ is freely slice.}

\begin{proof}
  [Proof of Theorem~\@\ref{theorem:main} assuming
  Proposition~\ref{proposition:construction-of-W}]

  Apply Theorem~\ref{theorem:s-cob-to-embedding} to $W$, to find a $4$-manifold
  $W'$ $s$-cobordant rel~$\d$ to $W$, via an $s$-cobordism $(Z;W,W')$ with maps
  \[F_1,\ldots,F_{2g}\colon S^2\times [0,1]\to Z\] such that the immersed
  spheres $F_i|_{S^2\times \{0\}}\colon S^2\to W$ represent a basis for
  $H_2(W;\Z)$ and $h_i:=F_i|_{S^2\times \{1\}}\colon S^2\to W'$ for
  $i=1,\dots,2g$ are framed embeddings with geometric intersections
  \[
    |h_{i}\pitchfork h_j|= \begin{cases}
      1 & \text{if } \{i,j\}=\{2k-1,2k\} \text{ for some } k, \\
      0 & \text{otherwise.}
    \end{cases}
  \]
  Since $Z$ is an $s$-cobordism, $\{h_i(S^2)\}$ forms a basis of $H_2(W';\Z)$ as
  well. Perform surgery on the $2$-spheres $h_2(S^2), h_4(S^2),\ldots,
  h_{2g}(S^2)$ to convert $W'$ into a compact topological $4$-manifold~$W''$
  with $\d W''\cong \d W'\cong \d W \cong M_L$.  Since each of the spheres
  $h_{2k}(S^2)$ has a geometric dual, the surgeries kill $H_2$ and preserve
  $\pi_1$. Thus $\pi_1(W'')$ is free on the meridians of~$L$ and
  $H_2(W'';\Z)=0$.  By Proposition~\ref{proposition:slice-disc} with $X=W''$,
  $L$ is freely slice.
\end{proof}

\section{Construction of the 4-manifold \texorpdfstring{$W$}{W}}

Our goal is to construct a 4-manifold $W$ with the properties stated in
Proposition~\ref{proposition:construction-of-W}.  The construction in this
section will be done completely in the smooth category.  The description of $W$
given below is closely related to a cobordism used in the proof
of~\cite[Theorem~8.9]{Cochran-Orr-Teichner:1999-1}.   Let $L$ be an
$m$-component boundary link with a boundary link Seifert surface
$V=\bigsqcup_{k=1}^m V_k$ that admits a homotopically trivial$^+$ good
basis~$\{a_i,b_i\}$.  Let $g_k$ be the genus of $V_k$ and let $g=\sum_{k=1}^m
g_k$.  Take a bicollar of $V_k$ in $S^3$ and identify the bicollar with
$V_k\times[-1,1]$. The bicollar consists of one 3-dimensional 0-handle $D^0
\times D^3$ and $2g_k$ 1-handles $D^1 \times D^2$ corresponding to the curves
$a_i$, $b_i$, as shown in Figure~\ref{figure:diagram-on-surface}. For $i$ such
that $a_i\cup b_i\subset V_k$, let $\beta_i$, $\gamma_i$ and $\delta_i$ be the
curves in $V_k\times [-1,1]$ shown in Figure~\ref{figure:diagram-on-surface}.
The curve $\beta_i$ bounds a disc in~$V_k=V_k\times 0$.   The $4g-m$ curves
$a_i$, $\beta_i$, $\gamma_i$ and $\delta_i$, viewed as curves in $S^3$ under the
bicollar inclusions, form a Kirby diagram.  The dotted curves $\beta_i$
represent 1-handles, while $a_i$, $\gamma_i$ and $\delta_i$ are zero-framed
attaching circles for 2-handles. Note that the zero framing of our curves in the
standard embedding picture in Figure~\ref{figure:diagram-on-surface} corresponds
to the zero framing under the bicollar embedding, since the Seifert pairing of
$V_k$ vanishes on $(a_i,a_i)$ and~$(b_i,b_i)$.  Let $W$ be the 4-manifold
described by this Kirby diagram.

\begin{figure}[htb!]
  \includegraphics[scale=.5]{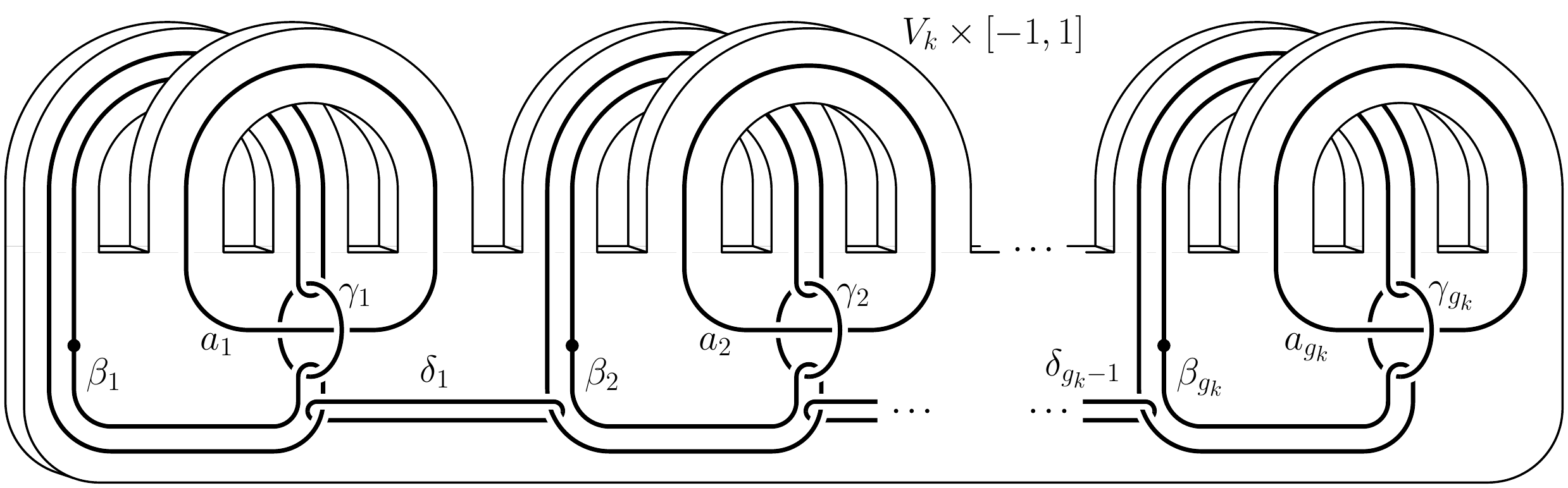}
  \caption{A Kirby diagram consisting of $a_i$, $\beta_i$, $\gamma_i$ and
  $\delta_i$ in the bicollar $V_k\times[-1,1]$.  For brevity, the indices $i$
  such that $a_i\cup b_i\subset V_k$ are assumed to be $1,2,\ldots,g_k$.  The
  curves $a_i$, $\gamma_i$ and $\delta_i$ are all zero framed.}
  \label{figure:diagram-on-surface}
\end{figure}

The boundary of $W$ is diffeomorphic to the zero surgery manifold~$M_L$, by the
arguments of~\cite[pp.~147--148]{Freedman:1993-1}
and~\cite[Lemma~8.10]{Cochran-Orr-Teichner:1999-1}. We describe the proof since
we need to keep track of meridians under $M_L\cong \partial W$.  Regard the
Kirby diagram as a surgery presentation of the 3-manifold $\partial W$
consisting of zero framed curves, ignoring the dots on~$\beta_i$.  Slide each
$\beta_i$ over $a_i$ twice, once from the inside of $a_i$ and once from the
outside, to obtain the surgery diagram in
Figure~\ref{figure:diagram-after-slide}.  We abuse notation and still denote the
resulting curves by~$\beta_i$.

\begin{figure}[htb!]
  \includegraphics[scale=.5]{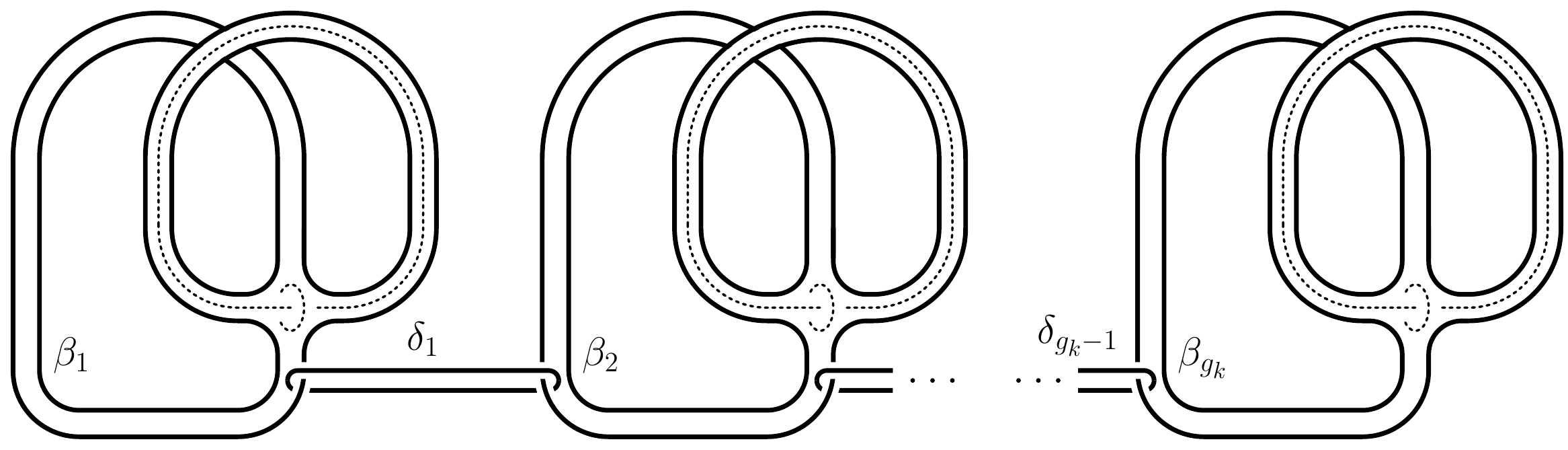}
  \caption{The surgery curves obtained after handle sliding. The curves
  $\beta_i$ and $\delta_i$ are all zero framed.}
  \label{figure:diagram-after-slide}
\end{figure}

Eliminate, in pairs, the curves $a_i$ and $\gamma_i$ shown as dashed
circles in Figure~\ref{figure:diagram-after-slide}.  Slide the new $\beta_1$,
$g_k-1$ times, toward the right hand side over the new
$\beta_2,\ldots,\beta_{g_k}$, and finally eliminate $\delta_{i}$ and
$\beta_{i+1}$ in pairs for $i=1,\ldots,g_k-1$.  Now the diagram consists of zero
framed curves isotopic to~$L$.  This shows that $\partial W \cong M_L$, as
desired.  In addition, for $\beta_i\subset V_k$, a meridian of $\beta_i$ is sent
to a meridian of the $k$th component of $L$ under this diffeomorphism.

\begin{claim}
  The fundamental group $\pi_1(W)$ is the free group on the $m$ meridians
  of~$L$.
\end{claim}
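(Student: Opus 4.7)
The plan is to compute $\pi_1(W)$ directly from the handle decomposition encoded by the Kirby diagram. The $0$-handle together with the $g$ one-handles attached along the dotted circles $\beta_1,\dots,\beta_g$ form a $4$-dimensional $1$-handlebody~$H$. Since each $\beta_i$ bounds a disc in $V_k \subset S^3$ and these discs can be taken mutually disjoint on~$V$, the $\beta_i$ form an unlink in~$S^3$, so $H \cong \natural^g (S^1 \times D^3)$ and $\pi_1(H) \cong F_g$, with free generators $t_1,\dots,t_g$ the loops meeting the discs bounded by $\beta_1,\dots,\beta_g$ once. Attaching the $3g-m$ two-handles along $a_i$, $\gamma_i$, and $\delta_i$ then yields the presentation
\[
  \pi_1(W) \;=\; F_g \big/ \langle\langle r_{a_i},\, r_{\gamma_i},\, r_{\delta_i} \rangle\rangle,
\]
where each $r_c$ is the word in $F_g$ read off the attaching circle $c$ viewed as a loop in~$H$.

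The main step is to identify these words from Figure~\ref{figure:diagram-on-surface} and verify that, taken together, they reduce $F_g$ to a free group of rank~$m$. By construction $a_i \subset V_k$ can be isotoped on $V_k$ to be disjoint from the small discs bounded by the $\beta_j$'s, so $r_{a_i}$ is trivial. The push-offs $\gamma_i = b_i^+$ and $\delta_i = b_i^-$ traverse the bicollar of $V_k$ in a controlled way; the resulting relations identify the $g_k$ generators $t_i$ corresponding to $\beta_i \subset V_k$ with a single element, leaving one generator per component of~$V$, that is, $m$ generators in total. No further relations arise, so $\pi_1(W) \cong F_m$.

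To identify these generators as meridians of $L$, we use the diffeomorphism $\partial W \cong M_L$ established in the preceding paragraph: as noted there, a meridian of $\beta_i \subset V_k$ is sent to a meridian of the $k$th component of~$L$. Hence the surviving generators of $\pi_1(W)$ may be chosen to be meridians of the components of~$L$. The main obstacle is reading off the relations $r_{\gamma_i}$ and $r_{\delta_i}$ precisely from Figure~\ref{figure:diagram-on-surface}, which amounts to tracking how the normal push-offs of each $b_i$ thread through the complements of the discs bounded by the $\beta_j$'s in the bicollar $V_k \times [-1,1]$. An alternative route, perhaps cleaner, is to reinterpret the moves used in the proof $\partial W\cong M_L$ four-dimensionally: the slides of $\beta_i$ over $a_i$ are valid $4$-dimensional $1$-handle slides, the $(\delta_i,\beta_{i+1})$ eliminations are valid $(1,2)$-cancellations, and the $(a_i,\gamma_i)$ eliminations correspond to stripping off $S^2\times S^2 \setminus B^4$ summands (via sliding to isolate a Hopf pair), which change $W$ but not $\pi_1(W)$; after all such moves only the $m$ dotted $\beta_i$ corresponding to the $m$ components survive.
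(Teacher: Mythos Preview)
Your overall strategy---read off a presentation of $\pi_1(W)$ from the handle decomposition---is exactly the paper's approach, and your treatment of the $a_i$ and of the final identification with meridians of $L$ is correct. The gap is in your identification of $\gamma_i$ and $\delta_i$: they are \emph{not} the push-offs $b_i^+$ and $b_i^-$. In fact $\gamma_i$ bounds a genus one subsurface $T_i^\circ$ of the bicollar that contains $b_i$ (see Figure~\ref{figure:toral-generator}); in particular $\gamma_i$, like $a_i$, misses the discs in $V_k$ bounded by the $\beta_j$, so $r_{\gamma_i}$ is also trivial. The $\delta_i$ are not push-offs of $b_i$ either: there are only $g-m$ of them (for $i=1,\ldots,g_k-1$ on each $V_k$), and each $\delta_i$ links two adjacent dotted circles, giving a relation $t_i = t_{i+1}$. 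Thus the only nontrivial relators come from the $\delta_i$, and together they identify all $g_k$ generators on a given component $V_k$, leaving exactly $m$ free generators. That is the paper's computation; once you correct the curve identifications your argument becomes identical to it.

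Your proposed alternative route contains a genuine error: sliding the dotted circle $\beta_i$ over the $2$-handle attaching circle $a_i$ is \emph{not} a valid $4$-dimensional handle move. The slides and cancellations in the verification of $\partial W\cong M_L$ are performed in the surgery calculus for the boundary $3$-manifold (with dots forgotten), and do not in general lift to Kirby moves on $W$ itself. So this route, as stated, does not establish the claim.
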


\begin{proof}
  To compute $\pi_1(W)$, begin with $\pi_1(D^4 \cup{}$1-handles$)$, which is
  free on the meridians of the curves~$\beta_i$. The 2-handles attached along
  $a_i$ and $\gamma_i$ do not give relators, since $a_i$ and $\gamma_i$ are
  null-homotopic in $D^4 \cup($1-handles$)$.  This can be seen, for instance,
  by inspecting their intersections with the disc in $V_k$ bounded
  by~$\beta_i$. The 2-handles attached along $\delta_i$ give relations
  identifying meridians of all $\beta_i$ lying on the same~$V_k$.  So we have
  $m$ generators, one for each~$V_k$.  We already verified that these generators
  are meridians of $L$ in $\partial W \cong M_L$.
\end{proof}

Next, we construct $2g$ immersed spheres
$\Sigma_1,\ldots,\Sigma_{2g}$ in~$W$. Apply Lemma~\ref{lemma:homotopy-trivial+}
to $J=\{a_i^{\mathstrut},b_i'\}_{i=1}^{g}$ and $K=\{b_i'\}_{i=1}^{g}$ to obtain
collections of discs $\{\Delta_i^+\}_{i=1}^{2g}$ and $\{\Delta_i\}_{i=1}^g$ in
$D^4$ satisfying
\[
  \begin{array}{@{}l@{\quad}l}
    \partial\Delta_{i}^+ = a_i,\, \partial\Delta_{g+i}^+ = b_i'^+ ,\,
    \partial\Delta_i = b_i' &
    \text{for }1\le i \le g,
    \\
    \Delta_i\cap \Delta_j=\emptyset &\text{for }i\ne j, \text{ and}
    \\
    \Delta_i^{\mathstrut}\cap \Delta_j^+ = \emptyset &\text{for all $i$
    and~$j$.}
  \end{array}
\]
Here $b_i'^+$ is a zero linking parallel of $b_i'$ as before. For each of these
discs, perform interior twists~\cite[Section~1.3]{Freedman-Quinn:1990-1} if
necessary to arrange that the signs of the self intersections add up to zero. It
follows that the discs induce the zero framing on the boundary.  Note that the
discs can also be assumed to be smoothly immersed and transverse to each other.
We will use the discs to build our immersed spheres.

First, for $i=1,\ldots,g$, cap off the disc $\Delta_i^+$ with the core
$D^2\times \{0\}$ of the 2-handle $D^2 \times D^2$ attached
along~$a_i=\partial\Delta_i^+$.  Let
\[
  \Sigma_{2i-1} := (D^2\times\{0\}) \cupover{a_i} \Delta_i^+ \quad(i=1,\ldots,g)
\]
be the resulting immersed spheres.  They are half of our spheres.  Since the
2-handle is attached along the zero framing, the normal bundle of
$\Sigma_{2i-1}$ is trivial, so $\Sigma_{2i-1}$ is framed.

Now we define the spheres $\Sigma_{2i}$, for $i=1,\dots,g$. Take the genus one
surface $T_i^\circ \subset W$ shown in Figure~\ref{figure:toral-generator}.  It
is bounded by $\gamma_i$ and contains~$b_i$.  Cap off $T_i^\circ$ with the core
of the 2-handle attached along $\gamma_i$, to obtain a torus in $W$, which we
call~$T_i$.

\begin{figure}[htb!]
  \includegraphics[scale=.5]{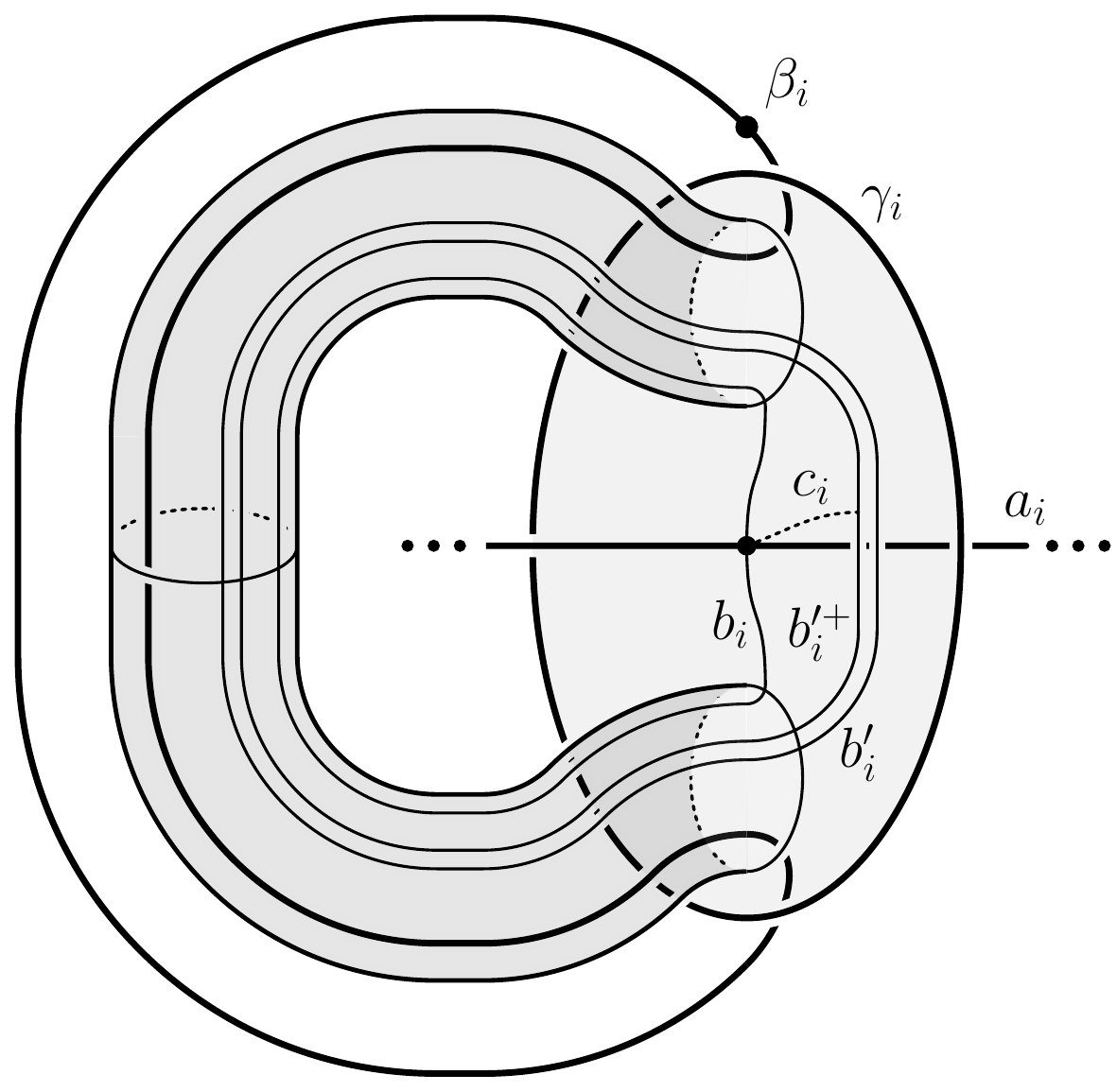}
  \caption{The genus one surface $T_i^\circ$ with $\partial T_i^\circ=\gamma_i$.
  The push-off $b_i'$ of $b_i$ is either the one shown, or a parallel on the
  opposite side of $b_i$, depending on the normal direction used to take~$b_i'$.
  The parallel $b_i'^+$ is between $b_i$ and~$b_i'$.  The dashed arc $c_i$ on
  $T_i^\circ$ joins the intersection $a_i \cap T_i^\circ = a_i\cap b_i$ to the
  parallel~$b_i'^+$, without intersecting~$b_i'$.}
  \label{figure:toral-generator}
\end{figure}

Note that the normal direction of $b_i$ in $T_i^\circ$ agrees with a normal
direction of the Seifert surface.  So we may assume that $b_i'$ is a parallel of
$b_i$ on~$T_i^\circ$, as shown in Figure~\ref{figure:toral-generator}. Also,
since the Seifert pairing vanishes on $(b_i,b_i)$, we may assume that the zero
linking parallel $b_i'^+$ is on $T_i^\circ$ too.  In addition, we assume that
$b_i'^+$ is a translate of $b_i'$ taken along the normal direction of $b_i'$
\emph{toward} $b_i$, as shown in Figure~\ref{figure:toral-generator}.  That is,
$b_i'^+$ lies between $b_i'$ and $b_i$ on~$T_i^\circ$.  (This will be crucial in
verifying $\pi_1$-nullity later.) Contract $T_i$ using the discs $\Delta_{i}$
and $\Delta_{g+i}^+$ to obtain a 2-sphere~$\Sigma_{2i}$.  More
precisely, take the annulus cobounded by $b_i'$ and $b_i'^+$ in $T_i^\circ$, and
remove it from $T_i$ to get the complementary annulus~$A_i$.  Attach the discs
$\Delta_{i}^{\mathstrut}$ and $\Delta_{g+i}^+$ to $A_i$ along the boundary, to
construct our 2-spheres
\[
  \Sigma_{2i} := \Delta_i \cupover{b_i'} A_i \cupover{b_i'^+} \Delta_{g+i}^+
  \quad(i=1,\dots,g).
\]
Since $\Delta_i^{\mathstrut}$, $\Delta_{g+i}^+$ and $A_i$ induce the zero
framing on $b_i'$ and $b_i'^+$, $\Sigma_{2i}$ is framed.

\begin{figure}[htb!]
  \centering
  \includegraphics[scale=.9]{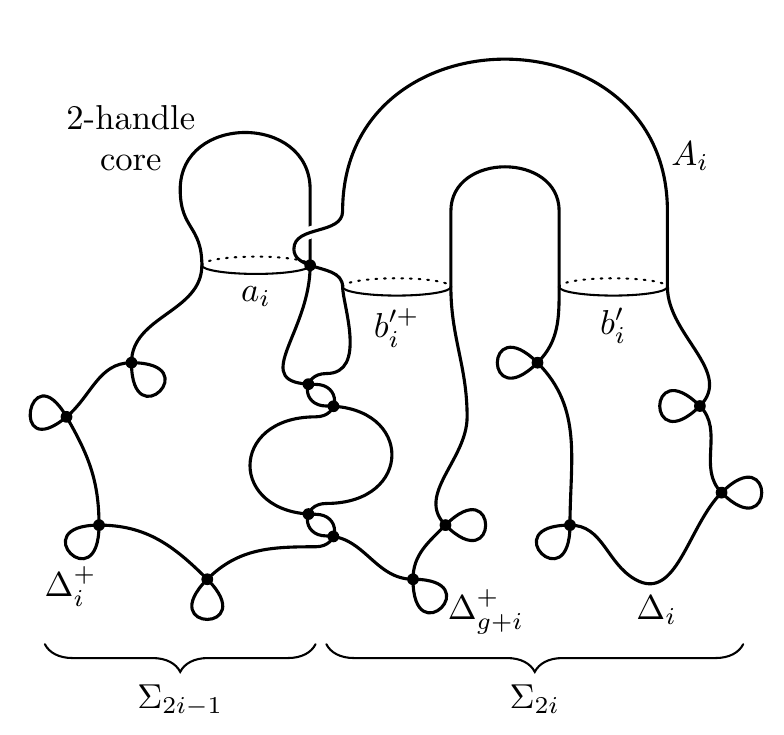}
  \caption{A schematic diagram of construction of $\Sigma_{2i-1}$
    and~$\Sigma_{2i}$.  The dot on the circle $a_i$ represents the intersection
    $a_i\cap A_i$ shown in Figure~\ref{figure:toral-generator}.  Other dots
    represent potentially existing intersections.}
  \label{figure:spherical-generators}
\end{figure}

\begin{claim}
  The homology $H_2(W;\Z)$ is the free abelian group of rank $2g$ generated by
  the classes of~$\Sigma_i$.
\end{claim}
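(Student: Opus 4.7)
The plan is to compute $H_2(W;\Z)$ using the long exact sequence of the pair $(W,Y)$, where $Y\subset W$ denotes the union of the $0$-handle and the $g$ one-handles, and then to identify each class $[\Sigma_j]$ by counting signed intersection numbers with the cocores of the $2$-handles. Note that $Y\simeq\bigvee_g S^1$ has $H_1(Y;\Z)\cong\Z^g$ generated by meridians of the dotted circles $\beta_i$, and that $H_2(W,Y;\Z)$ is free abelian of rank $3g-m$ on the cores of the $2$-handles attached along $a_i$, $\gamma_i$, and $\delta_j$.

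First, using $H_2(Y;\Z)=0$, the pair $(W,Y)$ gives the exact sequence
\[
0 \to H_2(W;\Z) \to H_2(W,Y;\Z) \xrightarrow{\partial} H_1(Y;\Z) \to H_1(W;\Z) \to 0,
\]
where $\partial$ sends each $2$-handle core to the class of its attaching circle. The proof of the previous claim shows that $a_i$ and $\gamma_i$ are null-homotopic in $Y$, so the corresponding cores lie in $\ker\partial$. Since $\pi_1(W)$ is free of rank $m$, one has $H_1(W;\Z)\cong\Z^m$, which forces the $g-m$ classes $[\delta_j]$ to generate a rank $g-m$ direct summand of $H_1(Y;\Z)\cong\Z^g$, and in particular to be linearly independent. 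Hence $\ker\partial$ is exactly the rank $2g$ free abelian subgroup generated by the cores of the $a_i$- and $\gamma_i$-$2$-handles, so $H_2(W;\Z)\cong\Z^{2g}$ injects onto this subgroup.

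Next, I would identify each $[\Sigma_j]$ in this basis by computing its intersection number with every $2$-handle cocore. The key geometric input is that the discs $\Delta_i$ and $\Delta_i^+$ given by Lemma~\ref{lemma:homotopy-trivial+} lie in the $0$-handle $D^4$, and the surface $T_i^\circ$ lies in the bicollar $V_k\times[-1,1]\subset S^3=\partial D^4$; in particular none of these pieces meets the interior of any $2$-handle. Therefore $\Sigma_{2i-1}=(D^2\times\{0\})\cupover{a_i}\Delta_i^+$ meets only the cocore of the $a_i$-$2$-handle, once transversely, so $[\Sigma_{2i-1}]=\pm(\text{core of the }a_i\text{-}2\text{-handle})$. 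Similarly, $\Sigma_{2i}=\Delta_i\cupover{b_i'}A_i\cupover{b_i'^+}\Delta_{g+i}^+$ contains the core of the $\gamma_i$-$2$-handle, absorbed into $A_i=T_i\setminus(\text{annulus in }T_i^\circ)$, and no other $2$-handle core, so $[\Sigma_{2i}]=\pm(\text{core of the }\gamma_i\text{-}2\text{-handle})$. Hence $\{[\Sigma_1],\ldots,[\Sigma_{2g}]\}$ maps to a basis of $H_2(W;\Z)\hookrightarrow H_2(W,Y;\Z)$, and so forms a basis of $H_2(W;\Z)$.

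The only step that requires care is the geometric verification that each $\Sigma_j$ meets exactly one $2$-handle cocore, once; this reduces to locating the various pieces within the handle decomposition, and the rest of the argument is routine bookkeeping in the long exact sequence of $(W,Y)$.
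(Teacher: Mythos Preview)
Your argument is correct and follows essentially the same approach as the paper: both compute $H_2(W;\Z)$ via the handle decomposition by examining which attaching circles are null in $H_1(W^{(1)};\Z)$, and both identify $[\Sigma_j]$ by observing that each $\Sigma_j$ contains exactly one $2$-handle core while its remaining pieces $\Delta_i$, $\Delta_i^+$, $T_i^\circ$ lie in~$W^{(1)}$. Your long exact sequence of the pair $(W,Y)$ and your cocore-intersection count are simply more explicit rephrasings of the paper's cellular chain complex computation and its remark that $\Sigma_j$ equals the corresponding core in $C_2(W;\Z)=H_2(W,W^{(1)};\Z)$.
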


\begin{proof}
Consider the cellular chain complex associated to the handle decomposition
of~$W$. In particular we have $C_2(W;\Z)=H_2(W,W^{(1)};\Z)$ where
$W^{(1)}=D^4\cup\text{1-handles}$.  Inspect the classes of 2-handle attaching
curves in $H_1(W^{(1)};\Z)$, in the $\pi_1$ computation above, to verify
that $H_2(W;\Z)\cong \Z^{2g}$ and that the 2-handle cores bounded by $a_i$ and
$\gamma_i$ are generators.  Note that each $\Sigma_i$ uses exactly one of these
2-handle cores, and that other building blocks $\Delta_i^{\mathstrut}$,
$\Delta_i^+$ and $T_i^\circ$ lie in~$W^{(1)}$.  It follows that $\Sigma_i$
equals the corresponding 2-handle core as a $2$-cycle in the cellular chain
complex~$C_2(W;\Z)=H_2(W,W^{(1)};\Z)$.
\end{proof}

Note that the framed spheres $\Sigma_{2i-1}$ and $\Sigma_{2i}$ have a
distinguished transverse intersection at~$a_i\cap T_i=a_i\cap b_i$.  This is
also depicted in Figure~\ref{figure:toral-generator}.  So, (a regular
neighborhood of) $\Sigma_{2i-1}\cup \Sigma_{2i}$ can be viewed as an immersed
pair of transverse spheres.

\begin{claim}
  The union $\bigcup_{i=1}^{2g} \Sigma_i$ of the $g$ pairs has
  algebraically trivial intersections.
\end{claim}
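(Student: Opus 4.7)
The plan is to compute the algebraic intersection numbers pairwise and show that the only nonzero contribution is the single essential transverse intersection $a_i\cap b_i$ between $\Sigma_{2i-1}$ and $\Sigma_{2i}$ for each $i$, while every other intersection point occurs in oppositely-signed pairs that can be joined by immersed Whitney discs in~$W$ (with the $\pi_1$-nullity needed for the Whitney loops to bound being the content of the next claim). I would first decompose each sphere into its building pieces: $\Sigma_{2i-1}=\Delta_i^+\cup(a_i\text{-core})$ and $\Sigma_{2i}=\Delta_i^{\mathstrut}\cup A_i\cup\Delta_{g+i}^+\cup(\gamma_i\text{-core})$. The 2-handle cores lie in separate handles and interact with other pieces only through their attaching circles; the discs $\Delta_i^{\mathstrut}$ and $\Delta_i^+$ sit in the 0-handle $D^4$; and the tori $T_i^\circ$ containing $A_i$ lie in the bicollar $V\times[-1,1]\subset S^3$, which after a small perturbation can be pushed into the interior of~$W$ transversely to everything coming from $D^4$.

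For the bicollar contribution, I would use the good-basis disjointness $a_i\cdot a_j = b_i\cdot b_j = 0$ on~$V$ and $a_i\cdot b_j=\delta_{ij}$ to arrange each $T_i^\circ$ in an arbitrarily small bicollar neighbourhood of $b_i\cup \gamma_i$. This makes distinct $T_i^\circ$ and $T_j^\circ$ disjoint and arranges $T_i^\circ\cap a_j=\emptyset$ for $i\neq j$, so the only bicollar intersection between $\Sigma_{2i-1}$ and $\Sigma_{2i}$ is the single plumbing point $a_i\cap b_i$.

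For the disc contribution in $D^4$, Lemma~\ref{lemma:homotopy-trivial+} gives $\Delta_i\cap\Delta_j=\emptyset$ for $i\neq j$ and $\Delta_i\cap\Delta_k^+=\emptyset$ for all $i,k$, so only the pairs $\Delta_i^+\cap\Delta_j^+$ can contribute. Their algebraic intersection equals the linking number in $S^3$ of the boundary curves, and those boundaries lie in $\{a_i\}_{i=1}^{g}\cup\{b_i'^+\}_{i=1}^{g}$. The homotopically trivial$^+$ good basis hypothesis makes every $*$ entry of the Seifert matrix~\eqref{equation:s-reducible-matrix} vanish, and combined with the defining properties $\lk(a_i,b_i')=0$ and $\lk(b_i',b_i'^+)=0$ this forces $\lk(a_i,a_j)=\lk(a_i,b_j'^+)=\lk(b_i'^+,b_j'^+)=0$ for all relevant $i,j$. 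The algebraic self-intersections of the $\Delta_i^{\mathstrut}$ and $\Delta_i^+$ were set to zero by the interior twists performed at the outset of the construction.

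Putting these together, every pair $\Sigma_i\cdot \Sigma_j$ with $\{i,j\}\ne\{2k-1,2k\}$ has algebraic intersection zero, as does every self-intersection, so all excess geometric intersections cancel in oppositely-signed pairs and may be matched by immersed Whitney discs. I expect the main obstacle to be the careful bookkeeping in the bicollar region, in particular verifying that each $T_i^\circ$ can be positioned simultaneously to contain the parallels $b_i'$ and $b_i'^+$ in the configuration dictated by Figure~\ref{figure:toral-generator}, to meet $a_i$ at exactly one point, and to avoid every $T_j^\circ$ and every $a_j$ with $j\ne i$, together with tracking sign conventions so that the vanishing $*$ entries genuinely translate into the vanishing of each linking number needed above.
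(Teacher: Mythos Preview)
Your decomposition and linking-number bookkeeping are correct and match the paper's argument. The one substantive difference is how you obtain the Whitney discs. You say the Whitney loops bound ``with the $\pi_1$-nullity needed for the Whitney loops to bound being the content of the next claim,'' thereby making this claim depend on the next one. The paper avoids this forward reference by a cleaner observation you overlooked: every excess intersection point (self or mutual) occurs among the discs $\Delta_i$ and $\Delta_j^+$, all of which lie in the $0$-handle~$D^4$. Since $D^4$ is simply connected, vanishing of the $\Z$-valued intersection numbers is already enough to pair the points by immersed Whitney discs \emph{in~$D^4$}, which are then Whitney discs in~$W$. No appeal to $\pi_1$-nullity of the spheres in~$W$ is needed, and the two claims become logically independent. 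Your route does work once the next claim is established (since $\pi_1$-nullity of the connected piece containing a given pair of intersection points forces all the associated $\pi_1(W)$-elements to coincide, so $\Z$-cancellation upgrades to $\Z[\pi_1(W)]$-cancellation), but it is an unnecessary detour.

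Your anticipated ``main obstacle'' about positioning the $T_i^\circ$ in the bicollar is not an obstacle at all: the good-basis disjointness $a_i\cdot a_j=b_i\cdot b_j=0$ and $a_i\cdot b_j=\delta_{ij}$ is already built into the construction of $W$ in Figure~\ref{figure:diagram-on-surface}, so the annuli $A_i$ are pairwise disjoint and meet $a_j$ only when $j=i$, at the single distinguished point. There is no perturbation to perform and nothing to push into the interior; the $A_i$ sit in $S^3=\partial D^4$ and meet the immersed discs only along their common boundary circles.
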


\begin{proof}
First, we investigate the intersections of the discs $\Delta_i$
and~$\Delta_i^+$. Recall that the signs of the self intersections of each disc
add up to zero, that is, the $\Z$-valued self intersection number vanishes.
Since the discs lie in $D^4$, which is simply connected, it follows that all the
self intersections can be paired up by (possibly immersed) Whitney discs.  For
$\Delta_i^+$ and $\Delta_j^+$ with $i\ne j$, their $\Z$-valued intersection
number vanishes because the linking number of $\partial\Delta_i^+$ and $\partial
\Delta_j^+$ is zero.  So, their intersections can also be paired up by Whitney
discs in~$D^4$. Moreover, recall that $\Delta_i\cap \Delta_j=\emptyset$ for
$i\ne j$  and $\Delta_i^+ \cap \Delta_j = \emptyset$ for all $i$ and~$j$.

Now, consider the spheres~$\Sigma_i$.  Since they are built using $\Delta_i$,
$\Delta_i^+$, $A_i$ and 2-handle cores, the (self and non-self) intersections of
the spheres, with the exception of the distinguished transverse intersection of
the dual pair $(\Sigma_{2i-1}, \Sigma_{2i})$ at $a_i\cap T_i = a_i \cap b_i$,
are exactly the (self and non-self) intersections of the discs $\Delta_i$ and
$\Delta_i^+$. These can all be paired up by immersed Whitney discs by the
preceding paragraph.  This proves the claim.
\end{proof}

\begin{claim}
  The union $\bigcup_{i=1}^{2g} \Sigma_i$ is $\pi_1$-null in~$W$.
\end{claim}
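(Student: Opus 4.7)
The plan is to exhibit a simply-connected subspace $D\subset W$ that contains the whole union $\bigcup_{i=1}^{2g}\Sigma_i$. Once this is in hand, the inclusion $\bigcup_i \Sigma_i\hookrightarrow W$ factors through $D$, so for every choice of basepoint the induced map on $\pi_1$ factors through the trivial group $\pi_1(D)$, giving $\pi_1$-nullity immediately.

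I would define $D\subset W$ to be the union of the $0$-handle $D^4$ with the cores of the $2$-handles attached along $a_1,\dots,a_g$ and $\gamma_1,\dots,\gamma_g$. From Figure~\ref{figure:diagram-on-surface}, each attaching circle $a_i$ and $\gamma_i$ lies in the bicollar $V_k\times[-1,1]\subset S^3=\partial D^4$, and each is disjoint from the $1$-handle attaching regions, since every dotted $\beta_i$ bounds a disc on $V_k$ disjoint from $a_i$ and $\gamma_i$. Hence each such $2$-handle is attached directly to $D^4$ along a circle in $\partial D^4$, and $D$ is $D^4$ with $2g$ cells attached along curves in $S^3$. Since $D^4$ is simply connected and each attaching circle bounds a disc in $D^4$, the space $D$ is simply connected; in fact $D\simeq \bigvee^{2g}S^2$.

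I would then verify directly from the construction that $\bigcup_i\Sigma_i\subset D$. The sphere $\Sigma_{2i-1} = (D^2\times\{0\})\cupover{a_i}\Delta_i^+$ consists of the core of the $2$-handle attached along $a_i$, which lies in $D$ by definition, together with the disc $\Delta_i^+$, which by Lemma~\ref{lemma:homotopy-trivial+} lies in $D^4\subset D$. The sphere $\Sigma_{2i}=\Delta_i\cupover{b_i'}A_i\cupover{b_i'^+}\Delta_{g+i}^+$ uses the discs $\Delta_i,\Delta_{g+i}^+\subset D^4$ and the annulus $A_i$. By construction $A_i$ is the union of the complement in $T_i^\circ$ of the narrow annulus between $b_i'$ and $b_i'^+$ with the core of the $2$-handle attached along $\gamma_i$. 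The surface $T_i^\circ$ sits on the bicollar $V_k\times[-1,1]\subset S^3\subset D^4$, and the core of the $2$-handle attached along $\gamma_i$ lies in $D$ by definition, so $A_i\subset D$, whence $\Sigma_{2i}\subset D$.

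The main point that requires care is verifying the set-theoretic containments: that the immersed discs produced by Lemma~\ref{lemma:homotopy-trivial+} genuinely lie in the $0$-handle $D^4$ of $W$, and that the surface $T_i^\circ$ can be arranged inside the bicollar so that it too lies in $S^3\subset D^4$. Both follow from the constructions just described. Granting these, the $\pi_1$-nullity of $\bigcup_i\Sigma_i$ in $W$ follows at once from the simple-connectedness of~$D$.
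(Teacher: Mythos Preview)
Your argument has a genuine gap: it conflates the ambient $4$-ball of the Kirby diagram with a simply-connected subspace of~$W$. In the dotted-circle convention the $1$-handles are \emph{carved} from $D^4$, so that $W^{(1)}=D^4\setminus\nu(\bigcup D_j)$ and the $2$-handles are then glued on from the outside. Thus the Kirby $D^4$ is \emph{not} a subset of~$W$, and ``$S^3\subset D^4$'' does not place anything inside~$W$.

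If instead by ``the $0$-handle $D^4$'' you mean an honest $0$-handle $D^4_0\subset W^{(1)}\subset W$, then the criterion for a subset of the bicollar to lie in $D^4_0$ is exactly that it miss the disc $E_i\subset V_k$ bounded by $\beta_i$ --- this disc plays the role of the cocore of the $i$th $1$-handle. Here your argument breaks down: the annulus $A_i$ (more precisely, the pair-of-pants $A_i\cap T_i^\circ$) \emph{does} meet $E_i$. Indeed, the paper's Remark after the proof points out that had $b_i'^+$ been placed on the far side of $b_i'$, the arc $c_i$ on $T_i^\circ$ joining $a_i\cap b_i$ to $b_i'^+$ would be forced to cross~$E_i$; hence $T_i^\circ\cap E_i\neq\emptyset$. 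Since the small annulus removed from $T_i$ to form $A_i$ lies between $b_i'^+$ and $b_i'$, hence off $V_k$, the intersection $T_i^\circ\cap E_i\subset V_k$ survives into~$A_i$. Consequently $A_i\not\subset D^4_0\cup(\text{$2$-handle cores})$, and your proposed simply-connected $D$ does not contain $\bigcup_i\Sigma_i$.

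This is precisely why the paper's proof is more elaborate. Rather than exhibit a simply-connected container for all of $\bigcup_i\Sigma_i$, the paper strips away pieces whose $\pi_1$-nullity can be checked separately (the $a_i$-handle cores and the discs~$\Delta_i$), and then deformation retracts the remaining annulus $A_i$ onto the $1$-complex $c_i\cup b_i'^+$. Only this retract --- not $A_i$ itself --- avoids the $1$-handle cocore and hence lies in~$D^4_0$. The delicate placement of $b_i'^+$ between $b_i$ and $b_i'$ is exactly what makes this final step go through.
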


\begin{proof}
Recall that $\Sigma_{2i-1}$ is the union of $\Delta_i^+$ and a 2-handle core
disc.  Remove these core discs from $\bigcup_{i=1}^{2g} \Sigma_i$ to get a
2-complex~$Y$. That is, $Y=\bigcup_{i=1}^g (\Delta_i^+ \cup \Sigma_{2i})$. Since
the inclusion induces an isomorphism $\pi_1(Y)\cong \pi_1(\bigcup_i \Sigma_i)$
for any basepoint, $\bigcup_{i=1}^{2g} \Sigma_i$ is $\pi_1$-null if and only if
$Y$ is $\pi_1$-null in~$W$.  Remove from $Y$ the discs $\Delta_i$ used to
construct $\Sigma_{2i}$, for all $i$, to obtain a 2-complex~$Y_0$ given
by
\[
  Y_0 := \bigcup_{i=1}^g \Big( \Delta_i^+ \cup \big(
    A_i \cupover{b_i'^+} \Delta_{g+i}^+ \big)\Big).
\]
Since $(\operatorname{int}\Delta_i) \cap Y_0 =\emptyset$ and $\Delta_i \cap
\Delta_j=\emptyset$ for $i\ne j$, $\pi_1(Y)$ is the free product of $\pi_1(Y_0)$
and the groups $\pi_1(\Delta_i)$, by the Seifert-van Kampen theorem. Each
$\Delta_i$ is $\pi_1$-null since $\Delta_i$ is contained in~$D^4$.  Therefore,
$Y$ is $\pi_1$-null if and only if $Y_0$ is $\pi_1$-null.

Now, consider the arc $c_i$ on $A_i$ shown in
Figure~\ref{figure:toral-generator}, which joins the point $a_i\cap A_i$ to the
boundary circle~$b_i'^+$.  From Figure~\ref{figure:toral-generator}, we see that
the annulus $A_i$ strongly deformation retracts onto the 1-subcomplex $c_i\cup
b_i'^+$. It follows that $Y_0$ strongly deformation retracts onto
\[
  Y_1 = \bigcup_{i=1}^g \big( \Delta_{2i-1}^+ \cup c_i \cup \Delta_{2i}^+ \big).
\]
Observe that $c_i$ is contained in $W^{(1)}=D^4\cup{}$1-handles.  Moreover,
$c_i$ is disjoint from the 1-handles of~$W$, since $c_i$ does not intersect the
obvious 2-disc in $V_k$ bounded by the dotted circle~$\beta_i$, as seen in
Figure~\ref{figure:toral-generator} (see also
Figure~\ref{figure:diagram-on-surface}).  Here, it is crucial that the parallel
$b_i'^+$ lies between $b_i$ and~$b_i'$; if $b_i'^+$ were on the opposite side of
$b_i'$, then $c_i$ would meet (the cocore of) the 1-handle corresponding
to~$\beta_i$. It follows that the 2-complex $Y_1$ is contained in~$D^4$.
Therefore $Y_1$ is $\pi_1$-null, and consequently $Y_0$, $Y$ and
$\bigcup_{i=1}^{2g}\Sigma_i$ are all $\pi_1$-null.  This proves the claim, and
completes the proof of Proposition~\ref{proposition:construction-of-W}.
\end{proof}

\begin{remark}
  In the above $\pi_1$-nullity proof, note that there might have been a
  homotopically essential loop in~$\Sigma_{2i-1}\cup \Sigma_{2i}$, passing
  through the distinguished transverse intersection $a_i\cap A_i$ exactly once,
  from one sphere to another: start from $a_i\cap A_i$, depart into
  $\Delta_i^+\subset \Sigma_{2i-1}$ and proceed to reach an intersection with
  $\Delta_{g+i}^+ \subset \Sigma_{2i}$, and then go back to the point $a_i\cap
  A_i$ through~$\Sigma_{2i}$. See Figure~\ref{figure:spherical-generators}.
  This occurs since the discs $\Delta_i^+$ and $\Delta_{g+i}^+$ used to
  construct the dual 2-spheres $\Sigma_{2i-1}$ and $\Sigma_{2i}$ are allowed to
  meet. Our subtle choice of $b_i'^+$ from the two possibilities, together with
  the use of the arc $c_i$, is crucial in showing that such a loop is
  null-homotopic in~$W$.

  This subtlety is peculiar to our case, compared with the proof of the
  earlier result of Freedman and Teichner~\cite{Freedman-Teichner:1995-2} that a
  Whitehead double of a homotopically trivial$^+$ link is freely slice. In their
  construction, immersed discs used to construct a pair of dual 2-spheres are
  always disjoint, and thus their verification of the $\pi_1$-nullity does not
  involve this complication of our case.
\end{remark}

\begin{remark}
  It can also be shown that $\pi_2(W)=H_2(W;\Z[\pi_1(W)])$ is the free module
  of rank $2g$ over the group ring $\Z[\pi_1(W)]$ generated by our
  spheres~$\Sigma_{i}$.  Since we do not use this, we omit the details.
\end{remark}

\section{Comparison with previously known results}
\label{section:previous-results-comparison}

Here is a summary of previously known results on freely slicing good boundary
links. Recall that any Whitehead double of a pairwise linking number zero link
is a good boundary link.  For statement~\eqref{item:Freedman93} below, we need
some terminology from~\cite{Freedman:1993-1}.  A link is said to be a
\emph{$\d^2$-link} if it bounds a boundary link Seifert surface $V$ with basis
$\{\alpha_i,\beta_i\}$ for $H_1(V;\Z)$, for which the Seifert matrix is the
diagonal sum of matrices of the form $\sbmatrix{0 & \varepsilon \\ 1-\varepsilon
& 0}$ with $\varepsilon=0$ or $1$, and the homology classes $\beta_i\in
H_1(V;\Z)$ can be realised by a collection of disjoint simple closed curves
$b_i$ on $V$ that when considered as a link in its own right is itself a
boundary link~$J$. Moreover we require that $J$ bounds a boundary link Seifert
surface in $S^3$ whose interior is disjoint from~$V$.

\begin{enumerate}
  \item\label{item:Freedman82} Freedman \cite{Freedman:1982-2} showed that a
  knot is freely slice if and only if it has Alexander polynomial one, that is
  if it is a good boundary link.
  \item\label{item:Freedman85} Freedman \cite{Freedman:1985-1} showed that
  Whitehead doubles of boundary links are freely slice good boundary links.
  \item\label{item:Freedman88} Freedman \cite{Freedman:1988-2} showed that a
  Whitehead double of a 2-component link $L$ is freely slice if and only if $L$
  has linking number zero.
  \item\label{item:Freedman93} Freedman \cite{Freedman:1993-1} showed that a
  $\partial^2$-link $L$ is freely slice.
  \item\label{item:FreedmanTeichner95} Freedman and Teichner
  \cite{Freedman-Teichner:1995-2} showed that a Whitehead double of a
  homotopically trivial$^+$ link is freely slice.
\end{enumerate}

In addition to the above list of results on freely slice boundary links,
Cochran-Friedl-Teichner \cite{Cochran-Friedl-Teichner:2006-1} produced new slice
links from a satellite construction, whilst the first and third named
authors~\cite[Theorem~F]{Cha-Powell:2016-1} showed that any iterated ramified
4-fold Whitehead double is slice. Moreover, in~\cite{Cha-Powell:2016-1}, they
also introduced distorted Whitehead doubles, and showed that they are slice too.
Neither the results of \cite{Cochran-Friedl-Teichner:2006-1} nor
\cite{Cha-Powell:2016-1} address whether the slice discs are free.

Our result does not recover statement~\eqref{item:Freedman82}. Instead, for
an Alexander polynomial one knot $K$, form the $4$-manifold $W_0$ as in our
proof. Then the generators of $\pi_2(W_0)$ that we construct need not be
$\pi_1$-null. But, since $\pi_1(W_0)$ is the good group~$\Z$, it turns out that
one can use ordinary topological surgery to produce a slice disc exterior for
$K$, instead of surgery up to $s$-cobordism.  This is essentially the argument
of~\cite[Theorem~11.7B]{Freedman-Quinn:1990-1}.

In what follows, we discuss the relationship of our theorem with the
work~\eqref{item:Freedman85}--\eqref{item:FreedmanTeichner95}.  Also, with
reference to \eqref{item:Freedman85}--\eqref{item:FreedmanTeichner95}, in
Section~\ref{subsection:examples} we will provide examples of links that are
freely slice by our Theorem~\@\ref{theorem:main}, but which are neither
Whitehead doubles nor $\partial^2$-links.

\begin{lemma}
  \label{lemma:wh-and-ht+}
  Let $J$ be a link with trivial pairwise linking numbers.  Then the
  standard Seifert surface for the boundary link $\Wh(J)$ consisting of genus
  one surfaces has a good basis which is homotopically trivial\/$^+$ if and
  only if $J$ is a homotopically trivial\/$^+$ link.
\end{lemma}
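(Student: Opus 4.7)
The plan is to identify each curve in the standard good basis on the Seifert surface of $\Wh(J)$ with a natural parallel of or loop around the components of $J$, and to translate the two conditions in Definition~\ref{definition:good-basis-HT+} directly into Freedman--Teichner's condition. As depicted in Figure~\ref{figure:good-basis-for-wh}, $b_i$ is the core of the Whitehead annulus, isotopic to $J_i$ in $S^3$, while $a_i$ is a small unknotted loop at the $i$-th clasp. Because $\theta(b_i,b_i)=0$ in the good basis, the push-off $b_i'$ is a zero-framed parallel of $J_i$ in a tubular neighborhood $N(J_i)$; a radial isotopy in the disjoint $N(J_i)$ shows that $K=\bigsqcup_i b_i'$ is ambient isotopic to $J$. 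Similarly, $\lk(b_j,b_j')=\theta(b_j,b_j)=0$ tells us that the same isotopy carries $K\sqcup b_j$ to $J\cup J_j^+$ for a zero-linking parallel $J_j^+$ of $J_j$.

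The next step is to show that each $a_j$ is null-homotopic in $S^3\setminus K$. The unknot $a_j$ bounds an obvious disc $D_{a_j}$ in a small ball around the $j$-th clasp. For $i\ne j$, the component $b_i'$ lies in $N(J_i)$, disjoint from this ball, and so cannot meet $D_{a_j}$. The hypothesis $\lk(a_j,b_j')=0$, combined with the local geometry of the clasp, lets one choose $D_{a_j}$ disjoint from $b_j'$; this is exactly where the choice of push-off direction becomes essential. Consequently $K\sqcup a_j$ is link-homotopic to $K$ together with a split unknot, hence is homotopically trivial if and only if $K$ is.

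Assembling, the good basis $\{a_i,b_i\}$ is homotopically trivial\/$^+$ if and only if $K\sqcup a_j$ and $K\sqcup b_j$ are homotopically trivial for every $j$. Since $K\sqcup b_j$ homotopically trivial implies $K$ homotopically trivial by deleting the extra component, and hence implies $K\sqcup a_j$ homotopically trivial by the previous paragraph, this condition reduces to: $K\sqcup b_j$ is homotopically trivial for every $j$. Under the isotopy $K\sqcup b_j\cong J\cup J_j^+$, this is precisely Freedman--Teichner's definition of $J$ being homotopically trivial\/$^+$. The main obstacle is the null-homotopy of $a_j$ in $S^3\setminus K$: it depends crucially on the choice of push-off direction, since the opposite direction would have $b_j'$ piercing $D_{a_j}$, breaking the argument.
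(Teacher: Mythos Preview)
Your argument rests on a misreading of Figure~\ref{figure:good-basis-for-wh}. You assert that ``$a_i$ is a small unknotted loop at the $i$-th clasp'' bounding ``an obvious disc $D_{a_j}$ in a small ball''. That is not what the standard genus one Seifert surface carries: on this surface both band cores $a_i$ and $b_i$ follow the companion, so \emph{each of $a_i$, $b_i$, $b_i'$ is a zero-linking parallel of $J_i$}. In particular, when $J_i$ is knotted the curve $a_i$ bounds no disc in $S^3$ at all, and your construction of $D_{a_j}$ collapses. Nor can you rescue the picture by passing to a different basis on the same surface: any simple closed curve on this surface that is null-homologous in the companion solid torus is, in the paper's basis, of class $[a_i]\pm[b_i]$, and one computes $\theta(a_i\pm b_i,\,a_i\pm b_i)=\pm(\theta(a_i,b_i)+\theta(b_i,a_i))=\pm 1$. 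Thus the ``small clasp loop'' has self-linking $\pm 1$ and cannot appear in a good basis. The subtlety you highlight about the push-off direction of $b_j'$ relative to $D_{a_j}$ is therefore addressing a nonexistent situation.

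With the correct geometric picture the proof is shorter than the one you attempted. Since $a_j$, $b_j$, and $b_j'$ are all zero-linking parallels of $J_j$, the link $K=\bigsqcup_i b_i'$ is isotopic to $J$, and \emph{both} $K\sqcup a_j$ and $K\sqcup b_j$ are isotopic to $J\cup J_j^+$. Hence the two conditions in Definition~\ref{definition:good-basis-HT+} each translate, for every $j$, into the single statement that $J\cup J_j^+$ is homotopically trivial, which is exactly Freedman--Teichner's homotopically trivial$^+$ condition on~$J$. No separate treatment of $K\sqcup a_j$ via a bounding disc is needed.
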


\begin{proof}
  Let $J_i$ be the $i$th component of~$J$. Figure~\ref{figure:good-basis-for-wh}
  shows the standard genus one Seifert surface for the Whitehead double
  of~$J_i$.  Let $a_i$ and $b_i$ be the curves on the Seifert surface shown in
  Figure~\ref{figure:good-basis-for-wh}.  Since $J$ has trivial linking numbers,
  the Seifert matrix for the boundary link $\Wh(J)$ with respect to
  $\{a_i,b_i\}$ is the diagonal sum of $2\times 2$ matrices of the form
  $\sbmatrix{0 & \varepsilon \\ 1-\varepsilon & 0}$ with $\varepsilon=0$ or~$1$.
  It follows that $\{a_i,b_i\}$ is a good basis.

  As in the definition of a homotopically trivial$^+$ good basis, let $b_i'$ be
  a translate of $b_i$ along a normal direction to the Seifert surface such that
  $\lk(a_i,b_i')=0$, and let $K :=\bigsqcup b_i'$. Observe that $a_i$, $b_i$ and
  $b_i'$ are zero linking parallels of~$J_i$.  Thus both $K\cup a_i$ and $K\cup
  b_i$ are isotopic to $J\cup J_i^+$. It follows that the link $J$ is
  homotopically trivial$^+$ if and only if the good basis $\{a_i,b_i\}$ is
  homotopically trivial$^+$.
\end{proof}

Thus \eqref{item:FreedmanTeichner95} is a corollary of
Theorem~\ref{theorem:main}, as mentioned in the introduction.

A link $L$ is homotopically trivial$^+$ if and only if all of Milnor's
invariants with at most one repeated index are
trivial~\cite[Lemma~2.7]{Freedman-Teichner:1995-2}.  In particular, boundary
links are homotopically trivial$^+$ since all their Milnor's invariants vanish.
Also, a 2-component link with trivial linking number is homotopically
trivial$^+$, since $\bar\mu(112)=\bar\mu(122)=0$ for any such link, due to
cyclic symmetry and the shuffle relation~\cite[Theorem~6]{Milnor:1957-1}. It
follows that \eqref{item:FreedmanTeichner95} implies \eqref{item:Freedman85} and
\eqref{item:Freedman88}.  Therefore \eqref{item:Freedman85} and
\eqref{item:Freedman88} are corollaries of Theorem~\ref{theorem:main} too.

By the following lemma, Theorem~\ref{theorem:main}
implies~\eqref{item:Freedman93}.

\begin{lemma}
  \label{lemma:b^2-links-have-ht+-good-basis}
  A $\d^2$-link has a boundary link Seifert surface admitting homotopically
  trivial\/$^+$ good basis.
\end{lemma}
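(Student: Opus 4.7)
The plan is to realise a good basis $\{a_i,b_i\}$ on the Seifert surface coming from the $\partial^2$-link structure and then verify that this basis is homotopically trivial$^+$. Since the given Seifert matrix is already in the diagonal block form $\bigoplus_i\sbmatrix{0 & \varepsilon_i \\ 1-\varepsilon_i & 0}$, it is reducible to the null matrix by $S$-reductions, and the argument from the proof of Corollary~\ref{corollary:good-bd-link-iff-good-basis-exists} realises the symplectic basis $\{\alpha_i,\beta_i\}$ by simple closed curves with the required geometric intersection numbers: the curves $b_i$ from the $\partial^2$-link definition already realise $\beta_i$, and we choose $a_i$ to realise $\alpha_i$.

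By Definition~\ref{definition:good-basis-HT+} I need to show that for each $j$ the links $K\sqcup a_j$ and $K\sqcup b_j$ are homotopically trivial, where $K=\bigsqcup_i b_i'$ and $b_i'$ is the push-off of $b_i$ with $\lk(a_i,b_i')=0$. For $K\sqcup b_j$: since $\theta(b_j,b_j)=0$ the curve $b_j'$ is a zero-linking parallel of $b_j$, so after isotoping each $b_i'$ for $i\ne j$ back to $b_i$ the link $K\sqcup b_j$ becomes $J\sqcup b_j'$, a ramification of the boundary link $J$. Boundary links are homotopically trivial$^+$ (since all their Milnor invariants vanish), so by Remark~\ref{remark:htplus-for-pair}(\ref{item:htplus-pair-vs-link}) any ramification of $J$ is homotopically trivial$^+$, and in particular $K\sqcup b_j$ is homotopically trivial.

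For the link $K\sqcup a_j$ my plan is to show that $K\sqcup a_j$ is itself a boundary link, which immediately gives homotopy triviality. Disjoint Seifert surfaces for the components of $K$ are obtained as small parallel push-offs $V_i''$ of the $V_i'$ with $\partial V_i''=b_i'$; the hypothesis $\operatorname{int}(V')\cap V=\emptyset$ ensures these $V_i''$ can be taken pairwise disjoint and with interiors disjoint from $V$. The crucial task is to construct a Seifert surface $\Sigma_j$ for $a_j$ that is disjoint from all the $V_i''$.

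The main obstacle is producing this $\Sigma_j$. The $\partial^2$-Seifert matrix only controls linking numbers between cycles on $V$, so the vanishing of $\lk(a_j,\gamma)$ for cycles $\gamma$ on $V'$ (which by Alexander duality would be equivalent to $a_j$ being null-homologous in $S^3\setminus V''$) is not immediate from the hypotheses as stated. To handle this I would replace $V$ with a new Seifert surface $V^{\#}$ for $L$ obtained by doing ambient 1-surgery on $V$ along each $b_i$ using $V_i'$ together with a suitable parallel copy pushed off $V_i'$; this yields a Seifert surface on which the curve realising $\alpha_j$ can be chosen as a small meridian of a solid torus neighbourhood of $b_j$ that is disjoint from all the $V_l'$, and which therefore bounds a meridional disc $\Sigma_j$ inside the solid torus disjoint from the $V_i''$. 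This mirrors the way a clasp curve of a Whitehead double bounds a meridional disc in the solid torus neighbourhood of the underlying boundary link, and is essentially the mechanism that makes Freedman's original proof that $\partial^2$-links are freely slice go through.
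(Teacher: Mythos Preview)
Your treatment of $K\sqcup b_j$ is correct, and you correctly identify the obstacle for $K\sqcup a_j$: nothing in the $\partial^2$-hypothesis controls the linking of $a_j$ with internal cycles of the (possibly positive-genus) boundary-link Seifert surface $V'$ for $J$, so there is no reason $a_j$ should be null-homologous in the complement of $\bigsqcup V_i''$, and hence no reason $K\sqcup a_j$ should be a boundary link. Your proposed fix via ambient $1$-surgery on $V$ does not close this gap. After surgering $V$ along each $b_i$ the curves $b_i$ no longer lie on the resulting surface $V^{\#}$, so the good basis you are trying to certify has been destroyed; and the ``curve realising $\alpha_j$'' on $V^{\#}$ is a different knot in $S^3$ from $a_j\subset V$, so even if it bounds a meridional disc this says nothing about the link $K\sqcup a_j$. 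If instead you mean to start over on $V^{\#}$ with an entirely new good basis, you would have to identify the new $b$-curves, show they again form a boundary link with Seifert surface disjoint from $V^{\#}$, and control the new Seifert matrix --- none of which is addressed, and in general $V^{\#}$ has larger genus than $V$ when the $V_i'$ have positive genus.

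The paper sidesteps this by not attempting to show that $K\sqcup a_j$ is a boundary link at all. It proves the weaker, sufficient claim that $K\cup\gamma$ is homotopically trivial whenever $\gamma$ is any knot disjoint from a boundary-link Seifert surface $Y$ for $K$. The argument is via Milnor's invariants: since $\gamma$ misses $Y$, it lies in the kernel of the Pontryagin-Thom map $\pi_1(S^3\setminus K)\to F$, so by Stallings' theorem $[\gamma]$ is trivial in every lower-central-series quotient of $\pi_1(S^3\setminus K)$; hence all $\bar\mu$-invariants of $K\cup\gamma$ whose last index corresponds to $\gamma$ vanish, and combining this with the vanishing of $\bar\mu$ on $K$ itself and cyclic symmetry gives the vanishing of all non-repeating $\bar\mu$-invariants of $K\cup\gamma$. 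Taking $Y$ to be $W$ with an open collar of $\partial W$ removed, both $a_j$ and $b_j$ lie on $V$ and hence are disjoint from $Y$, so both $K\cup a_j$ and $K\cup b_j$ are homotopically trivial at once. A side benefit is that any geometric duals $a_i$ to the given $b_i$ work; one need not arrange $[a_i]=\alpha_i$.
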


\begin{proof}
  Suppose that $L$ is a $\d^2$-link.  Choose a boundary link Seifert surface $V$
  and disjoint simple closed curves $b_i$ on $V$ with the properties described
  in the definition of a $\d^2$-link.  The link $\bigsqcup b_i$ bounds a
  boundary link Seifert surface $W$ that is transverse to $V$ and satisfies
  $V\cap W=\bigsqcup b_i$.  Choose disjoint simple curves $a_i$ on $V$ that are
  geometrically dual to the~$b_i$. That is, the geometric intersections are
  given by $a_i\cdot b_j=\delta_{ij}$.  Here, we allow that the homology class
  of $a_i$ may be different from the $\alpha_i$ in $H_1(V;\Z)$ given in the
  definition of a $\d^2$-link.  Let $Y$ be the surface $W$ with an open collar
  of $\partial W$ removed. Then $\partial Y$ consists of push-offs $b_i'$ of the
  $b_i$, and $Y$ is a boundary link Seifert surface for the link $K=\bigsqcup
  b_i'$.

  We claim, in general, that if $K$ is a boundary link with boundary link
  Seifert surface $Y$ and $\gamma$ is a knot disjoint from $Y$, then
  $K\cup\gamma$ is homotopically trivial.  Applying this to our case, it follows
  that  $K\cup b_i$ and $K\cup a_i$ are homotopically trivial for each~$i$. This
  shows that $\{a_i,b_i\}$ is a homotopically trivial$^+$ good basis for~$L$.

  To prove the claim, let $\pi=\pi_1(S^3\sm K)$, and consider the epimorphism
  $\phi\colon \pi\to F$ onto the free group $F$ given by the Pontryagin-Thom
  construction for~$Y$.  Since $\gamma$ is disjoint from $Y$, $\gamma$ lies in
  the kernel of~$\phi$. Since $\phi$ induces an isomorphism $\pi/\pi_k\cong
  F/F_k$ between the lower central series quotients for every~$k$ by Stallings'
  theorem~\cite{Stallings:1965-1}, $\gamma$~is trivial in~$\pi/\pi_k$.  It
  follows that $\overline\mu_{K\cup \gamma}(j_1\cdots j_k)=0$ whenever
  $j_1,\ldots,j_{k-1}$ correspond to components of $K$ and $j_k$ corresponds to
  the component~$\gamma$. Also, if all of $j_1,\ldots,j_k$ correspond to
  components of $K$, then $\overline\mu_{K\cup \gamma}(j_1\cdots j_k) =
  \overline\mu_{K}(j_1\cdots j_k) = 0$ since $K$ is a boundary link. The
  vanishing of these Milnor's invariants, together with cyclic symmetry, in
  particular implies the vanishing of Milnor's invariants of $K\cup\gamma$ for
  all non-repeating multi-indices. Thus $K\cup \gamma$ is homotopically
  trivial~\cite{Milnor:1954-1}. This completes the proof of the claim and
  therefore of the lemma.
\end{proof}

\subsection{Examples that are not Whitehead doubles}
\label{subsection:examples}

In this section we present the promised examples of links that can be freely
sliced using Theorem~\@\ref{theorem:main}, but which are neither Whitehead
doubles nor $\partial^2$-links.

Let $\beta$ be a 2-component string link whose closure $\widehat\beta$ is a
non-slice link with trivial linking number.  For instance, $\beta$ could be the
string link in Figure~\ref{figure:wh-string-link}, whose closure is the
Whitehead link. Let $\beta_{(k,\ell)}$ be the $(k+\ell)$-component string link
obtained by replacing the first and second strand of $\beta$ with their
untwisted $k$ and $\ell$ cables respectively.  Let $L(\beta)$ be the
$2$-component link shown in Figure~\ref{figure:non-wh-example}.

\begin{figure}[ht!]
  $\beta=\vcenter{\hbox{\includegraphics[scale=.8]{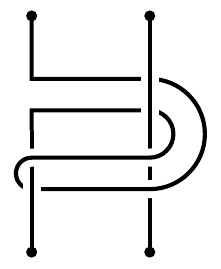}}}$
  \caption{A string link $\beta$ whose closure is the Whitehead link.}
  \label{figure:wh-string-link}
\end{figure}
\begin{figure}[ht!]
  \includegraphics[scale=.9]{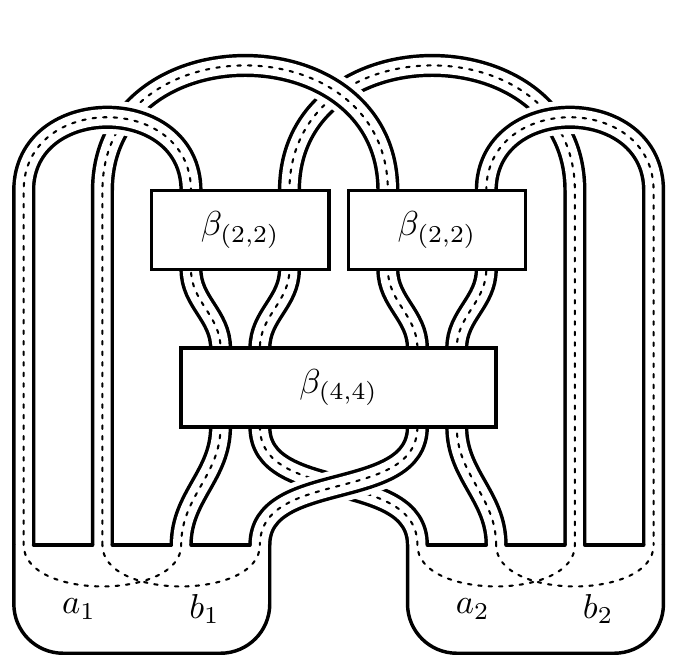}
  \caption{The link~$L(\beta)$.  The boxes indicate the relevant string links
   arising as cables of~$\beta$.}
  \label{figure:non-wh-example}
\end{figure}

\begin{corollary}
  \label{corollary:example}
  The link $L(\beta)$ is freely slice if $\widehat\beta$ has trivial linking
  number.
\end{corollary}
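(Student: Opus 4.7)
My plan is to apply Theorem~\ref{theorem:main} directly: I will exhibit a boundary link Seifert surface for $L(\beta)$ that carries a homotopically trivial${}^+$ good basis, so that slicing follows from the main theorem. The construction is modelled on the Whitehead case but inflated through the cabling boxes~$\beta_{(k,\ell)}$.

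First, I would build a boundary link Seifert surface $V=V_1\sqcup V_2$ for $L(\beta)$ by taking the obvious disc for each component away from the $\beta$-boxes and attaching bands at every clasp arising from the cabled copies of~$\beta$. Each such band contributes a genus-one summand, and I would pick on each handle the standard pair of dual curves $a_i,b_i$ (one running through the ``core'' of the band, one running around it) in complete analogy with Figure~\ref{figure:good-basis-for-wh} for the Whitehead case. By construction $a_i\cdot a_j=b_i\cdot b_j=0$ and $a_i\cdot b_j=\delta_{ij}$.

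Second, I would compute the Seifert matrix in this basis. Each diagonal $2\times 2$ block is $\sbmatrix{0 & \varepsilon \\ 1-\varepsilon & 0}$, where the $\varepsilon$ records the sign of the clasp produced by the corresponding crossing of~$\beta$, exactly as in the Whitehead-double picture. The off-diagonal entries are linking numbers between zero-linking parallels of the two strands of~$\beta$ (or of~$\widehat\beta$); these are bounded by the pairwise linking numbers of $\widehat\beta$, which vanish by hypothesis. Hence the Seifert matrix is a diagonal sum of $\sbmatrix{0 & \varepsilon \\ 1-\varepsilon & 0}$'s and so is reducible to the null matrix by $S$-reductions, showing that $\{a_i,b_i\}$ is a good basis (and in passing reconfirming that $L(\beta)$ is a good boundary link).

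Third, and most substantively, I would verify the homotopically trivial${}^+$ condition. With $b_i'$ the zero-linking push-off as in Definition~\ref{definition:good-basis-HT+} and $K=\bigsqcup b_i'$, the curves $a_j$, $b_j$ and every $b_i'$ are, by construction, zero-linking parallels of one or other of the two components of~$\widehat\beta$ (the strand depending on which side of the $\beta$-box the handle sits on). Consequently each of $K\sqcup a_j$ and $K\sqcup b_j$ is isotopic to a ramification of~$\widehat\beta$, in the sense of Remark~\ref{remark:htplus-for-pair}\ref{item:htplus-pair-vs-link}. Now $\widehat\beta$ is a two-component link with vanishing linking number, so the Milnor invariants $\bar\mu(112)$ and $\bar\mu(122)$ vanish by cyclic symmetry and the shuffle relation; by \cite[Lemma~2.7]{Freedman-Teichner:1995-2} this means $\widehat\beta$ is homotopically trivial${}^+$. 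Applying Remark~\ref{remark:htplus-for-pair}\ref{item:htplus-pair-vs-link} to each ramification, both $K\sqcup a_j$ and $K\sqcup b_j$ are homotopically trivial, so $\{a_i,b_i\}$ is a homotopically trivial${}^+$ good basis. Theorem~\ref{theorem:main} then yields the conclusion.

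The main obstacle is the bookkeeping in the third step: one must track the $a_i$, $b_i$ and $b_i'$ through the cabling boxes and see that after erasing all curves other than $K$ together with a single $a_j$ (or $b_j$), one really recovers a ramification of the closure $\widehat\beta$ and not of some more complicated link. This is where the choice of cabling coefficients $(k,\ell)$ and the orientation conventions on the bands in $L(\beta)$ matter; the key point is that, although the pattern of clasps in $L(\beta)$ is intricate, homotopy through immersions of disjoint circles cares only about the underlying link structure up to homotopy, which is governed entirely by~$\widehat\beta$.
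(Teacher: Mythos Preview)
Your overall strategy --- exhibit a homotopically trivial${}^+$ good basis and invoke Theorem~\ref{theorem:main} --- is exactly the paper's. But your execution of the third step contains a substantive error, not just bookkeeping.

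First, the Seifert surface visible in Figure~\ref{figure:non-wh-example} has total genus~$2$ (genus one per component of $L(\beta)$), with exactly the four curves $a_1,b_1,a_2,b_2$ drawn there; there is no ``summand per clasp of~$\beta$''. With this surface one has $K=b_1'\cup b_2'=\widehat\beta$, and indeed $K\cup b_1=\widehat{\beta_{(2,1)}}$ and $K\cup b_2=\widehat{\beta_{(1,2)}}$ are ramifications of~$\widehat\beta$, so your ramification argument handles the~$b_j$.

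However, your assertion that $K\sqcup a_j$ is also a ramification of~$\widehat\beta$ is \emph{false}. The curve $a_j$ threads through two of the cabling boxes, so $K\cup a_j$ is not a parallel cable of~$\widehat\beta$; it is the closure of the \emph{product} of the $3$-component string links $1_1\otimes\beta$ and~$\beta_{(1,2)}$. Remark~\ref{remark:htplus-for-pair}\ref{item:htplus-pair-vs-link} therefore does not apply, and your appeal to it is a gap.

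The paper fills this gap with a different observation: the closure of a product of string links is homotopically trivial whenever each factor has homotopically trivial closure. Since $\widehat{1_1\otimes\beta}$ is a split union of an unknot with~$\widehat\beta$, and $\widehat{\beta_{(1,2)}}$ is homotopically trivial because $\widehat\beta$ is homotopically trivial${}^+$, the product closure $K\cup a_j$ is homotopically trivial. You need this extra argument; the ``obstacle'' you flag at the end is precisely where your proposed proof breaks.
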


\begin{proof}
  Recall that a 2-component link with trivial linking number is homotopically
  trivial$^+$.  (See the discussion between Lemmas~\ref{lemma:wh-and-ht+}
  and~\ref{lemma:b^2-links-have-ht+-good-basis}.)    So, in our case,
  $\widehat{\beta_{(1,2)}}$ and $\widehat{\beta_{(2,1)}}$ are homotopically
  trivial.

  Figure ~\ref{figure:non-wh-example} is drawn in such a way that the Seifert
  surface ought to be obvious.  Consider the good basis $\{a_i,b_i\}$ shown, and
  let $K=b_1' \cup b_2'$, where $b_i'$ is a transport of $b_i$ along a normal
  direction of the surface such that $\lk(a_i,b_i')=0$.  Then $K=\widehat\beta$.
  Thus $K\cup b_1=\widehat{\beta_{(2,1)}}$ and $K\cup
  b_2=\widehat{\beta_{(1,2)}}$ are homotopically trivial.  For $i=1,2$, the link
  $K\cup a_i$ is the closure of the product of two 3-component string links $1_1
  \otimes \beta$ and $\beta_{(1,2)}$, where $1_1\otimes \beta$ denotes the split
  union of a 1-component trivial string link and~$\beta$.  Since
  $\widehat{\beta_{(1,2)}}$ and $\widehat\beta$ are homotopically trivial, it
  follows that $K\cup a_i$ is homotopically trivial. Therefore the good basis
  $\{a_i,b_i\}$ is homotopically trivial$^+$. Thus by
  Theorem~\ref{theorem:main}, the link $L(\beta)$ is freely slice.
\end{proof}

When $\beta$ is not slice, we cannot see any other way to show that $L(\beta)$
is slice. As an explicit example, let $\beta$ be the Whitehead string link in
Figure~\ref{figure:wh-string-link}.  Attempts to produce a slicing disc by
cutting 1-handles of the given Seifert surface fail.  Both components of the
link $L(\beta)$ in Figure~\ref{figure:non-wh-example} are hyperbolic knots (the
same hyperbolic knot with volume $13.16319$, in fact), as we verified using
SnapPy. Note that by contrast the components of a Whitehead double are
non-hyperbolic knots. Thus our link does not arise as a Whitehead double. Also
$L(\beta)$ bounds a collection of Seifert surfaces whose metabolisers are
realised by non-boundary links. In fact each possible metaboliser $(a_1,a_2)$,
$(a_1,b_2)$, $(b_1,a_2)$ or $(b_1,b_2)$ is a Whitehead link.  So $L(\beta)$ does
not seem to be a $\partial^2$-link. Nevertheless our Theorem~\ref{theorem:main}
applies to prove that $L(\beta)$ is slice, as shown in
Corollary~\ref{corollary:example}.

\section{Questions}\label{section-questions}

Here are some potentially interesting questions raised by the investigation in
this article.

\begin{enumerate}[label=(\arabic*)]
  \item\label{item:question-alternative-ht+-good-basis-for-wh}
  Is there a link that is not homotopically trivial$^+$ in the sense of
  Freedman and Teichner, but whose Whitehead double admits a homotopically
  trivial$^+$ good basis?
  \item\label{item:question-every-good-boundary-link}
  Does every good boundary link have a homotopically trivial$^+$ good
  basis?
\end{enumerate}

Put differently, \ref{item:question-alternative-ht+-good-basis-for-wh} asks
whether Theorem~\ref{theorem:main} can be used to slice Whitehead doubles to
which the Freedman-Teichner result does not apply.
If~\ref{item:question-alternative-ht+-good-basis-for-wh} has an affirmative
answer, the desired homotopically trivial$^+$ good basis will be on
a non-standard Seifert surface for the Whitehead double. One might ask a
generalised question: for two good bases for the same good boundary link,
possibly on different Seifert surfaces, is one homotopically trivial$^+$ if and
only if so is the other?  This has a negative answer, since one can stabilise a
Seifert surface by a genus three Seifert surface for the unknot, and apply the
technique of~\cite{Park:2016-1}.  So one should perhaps refine this version, for
example by restricting to the case that all connected components of the Seifert
surfaces have genus one.

If~\ref{item:question-every-good-boundary-link} has an affirmative answer, then
all good boundary links will be freely slice by Theorem~\ref{theorem:main}, and
consequently topological surgery would work in dimension~4 for arbitrary
fundamental groups. A Whitehead double of the Borromean rings might provide a
counterexample to~\ref{item:question-every-good-boundary-link}.

\bibliographystyle{amsalphanobts}
\def\MR#1{}
\bibliography{research}

\def\cprime{$'$}
\providecommand{\bysame}{\leavevmode\hbox to3em{\hrulefill}\thinspace}
\providecommand{\MR}{\relax\ifhmode\unskip\space\fi MR }
\providecommand{\MRhref}[2]{%
  \href{http://www.ams.org/mathscinet-getitem?mr=#1}{#2}
}
\providecommand{\href}[2]{#2}
\begin{thebibliography}{COT03}

\bibitem[CFT09]{Cochran-Friedl-Teichner:2006-1}
Tim~D. Cochran, Stefan Friedl, and Peter Teichner, \emph{New constructions of
  slice links}, Comment. Math. Helv. \textbf{84} (2009), no.~3, 617--638.
  \MR{2507256 (2011a:57011)}

\bibitem[COT03]{Cochran-Orr-Teichner:1999-1}
Tim~D. Cochran, Kent~E. Orr, and Peter Teichner, \emph{Knot concordance,
  {W}hitney towers and {$L\sp 2$}-signatures}, Ann. of Math. (2) \textbf{157}
  (2003), no.~2, 433--519. \MR{1 973 052}

\bibitem[CP16]{Cha-Powell:2016-1}
Jae~Choon Cha and Mark Powell, \emph{Casson towers and slice links}, Invent.
  Math. \textbf{205} (2016), no.~2, 413--457. \MR{3529119}

\bibitem[Far91]{Farber:1991-1}
Michael~S. Farber, \emph{Hermitian forms on link modules}, Comment. Math. Helv.
  \textbf{66} (1991), no.~2, 189--236. \MR{1107839}

\bibitem[FK16]{Freedman-Krushkal-2016-A}
Michael~H. Freedman and Vyacheslav Krushkal, \emph{Engel relations in
  $4$-manifold topology}, Forum Math. Sigma \textbf{4} (2016), e22, 57.
  \MR{3536924}

\bibitem[FM12]{Farb-Margalit:2012-1}
Benson Farb and Dan Margalit, \emph{A primer on mapping class groups},
  Princeton Mathematical Series, vol.~49, Princeton University Press,
  Princeton, NJ, 2012. \MR{2850125}

\bibitem[FQ90]{Freedman-Quinn:1990-1}
Michael~H. Freedman and Frank Quinn, \emph{Topology of $4$-manifolds},
  Princeton Mathematical Series, vol.~39, Princeton University Press,
  Princeton, NJ, 1990. \MR{MR1201584 (94b:57021)}

\bibitem[Fre82a]{Freedman:1982-2}
Michael~H. Freedman, \emph{A surgery sequence in dimension four\textup{;}\ the
  relations with knot concordance}, Invent. Math. \textbf{68} (1982), no.~2,
  195--226. \MR{MR666159 (84e:57006)}

\bibitem[Fre82b]{Freedman:1982-1}
Michael~H. Freedman, \emph{The topology of four-dimensional manifolds}, J.
  Differential Geom. \textbf{17} (1982), no.~3, 357--453. \MR{MR679066
  (84b:57006)}

\bibitem[Fre85]{Freedman:1985-1}
Michael~H. Freedman, \emph{A new technique for the link slice problem}, Invent.
  Math. \textbf{80} (1985), no.~3, 453--465. \MR{MR791669 (86m:57017)}

\bibitem[Fre88]{Freedman:1988-2}
Michael~H. Freedman, \emph{{${\rm Whitehead}\sb 3$} is a
  \textup{``}slice\textup{''} link}, Invent. Math. \textbf{94} (1988), no.~1,
  175--182. \MR{MR958596 (89h:57004)}

\bibitem[Fre93]{Freedman:1993-1}
Michael~H. Freedman, \emph{Link compositions and the topological slice
  problem}, Topology \textbf{32} (1993), no.~1, 145--156. \MR{MR1204412
  (94m:57011)}

\bibitem[FT95]{Freedman-Teichner:1995-2}
Michael~H. Freedman and Peter Teichner, \emph{{$4$}-manifold topology.
  \textup{{II}}. {D}wyer's filtration and surgery kernels}, Invent. Math.
  \textbf{122} (1995), no.~3, 531--557. \MR{MR1359603 (96k:57016)}

\bibitem[Hil12]{Hillman:2012-1-second-ed}
Jonathan Hillman, \emph{Algebraic invariants of links}, second ed., Series on
  Knots and Everything, vol.~52, World Scientific Publishing Co. Pte. Ltd.,
  Hackensack, NJ, 2012. \MR{2931688}

\bibitem[Ko87]{Ko:1987-1}
Ki~Hyoung Ko, \emph{Seifert matrices and boundary link cobordisms}, Trans.
  Amer. Math. Soc. \textbf{299} (1987), no.~2, 657--681. \MR{88h:57018}

\bibitem[Kru15]{Krushkal-15}
Vyacheslav Krushkal, \emph{\textup{``}{S}licing\textup{''} the {H}opf link},
  Geom. Topol. \textbf{19} (2015), no.~3, 1657--1683. \MR{3352246}

\bibitem[Lev66]{Levine:1966-1}
Jerome~P. Levine, \emph{Polynomial invariants of knots of codimension two},
  Ann. of Math. (2) \textbf{84} (1966), 537--554. \MR{34 #808}

\bibitem[Lia77]{Liang:1977-1}
Chao~Chu Liang, \emph{An algebraic classification of some links of codimension
  two}, Proc. Amer. Math. Soc. \textbf{67} (1977), no.~1, 147--151.
  \MR{0458439}

\bibitem[Mil54]{Milnor:1954-1}
John~W. Milnor, \emph{Link groups}, Ann. of Math. (2) \textbf{59} (1954),
  177--195. \MR{17,70e}

\bibitem[Mil57]{Milnor:1957-1}
John~W. Milnor, \emph{Isotopy of links. {A}lgebraic geometry and topology}, A
  symposium in honor of S. Lefschetz, Princeton University Press, Princeton, N.
  J., 1957, pp.~280--306. \MR{19,1070c}

\bibitem[Par16]{Park:2016-1}
JungHwan Park, \emph{Milnor's triple linking number and derivatives of genus
  three knots}, preprint available at ar{X}iv:1603.09163, 2016.

\bibitem[She06]{Sheiham:2006-1}
Desmond Sheiham, \emph{Invariants of boundary link cobordism. \textup{{II}}.
  {T}he {B}lanchfield-{D}uval form}, Non-commutative localization in algebra
  and topology, London Math. Soc. Lecture Note Ser., vol. 330, Cambridge Univ.
  Press, Cambridge, 2006, pp.~143--219. \MR{2222485}

\bibitem[Sta65]{Stallings:1965-1}
John Stallings, \emph{Homology and central series of groups}, J. Algebra
  \textbf{2} (1965), 170--181. \MR{31 \#232}

\end{thebibliography}
\end{document}